\documentclass{article}
\usepackage[utf8]{inputenc}
\usepackage{amsmath,amssymb,amsthm,amsfonts,latexsym,graphicx,mathtools,enumitem,bbm}

\usepackage{tikz}
\usepackage{subcaption}

\usepackage{url}
\usepackage{xcolor}
\usepackage{svg}

\usepackage{scalerel}

\newcommand{\R}{\mathbb{R}}
\DeclareMathOperator{\RQ}{RQ}

\DeclareMathOperator{\id}{Id}

\DeclareMathOperator{\supp}{supp}
\DeclareMathOperator{\vol}{vol}

\newcommand{\tilh}[0]{\tilde{h}}
\DeclareMathOperator{\range}{Range}

\theoremstyle{plain}
\newtheorem{theorem}{Theorem}[section]
\newtheorem{lemma}[theorem]{Lemma}
\newtheorem{proposition}[theorem]{Proposition}
\newtheorem{corollary}[theorem]{Corollary}

\newtheorem{question}{Question}
\newtheorem{claim}{Claim}

\theoremstyle{definition}
\newtheorem{definition}[theorem]{Definition}
\newtheorem{example}[theorem]{Example}
\theoremstyle{remark}
\newtheorem{remark}[theorem]{Remark}

\usepackage{stmaryrd} %for double square brackets
\usepackage[affil-it]{authblk}
\usepackage{booktabs}   % for \toprule et al
\usepackage[numbers,sort&compress]{natbib}

\usepackage{hyperref}
\usepackage{comment}

\usepackage[margin=3.5cm]{geometry}

\title{Cheeger-type inequalities for the second largest spectral gap from $1$ of the normalized Laplacian}

\author[1]{Lies Beers\thanks{e.g.m.beers@vu.nl}}
\author[1]{Raffaella Mulas}
\author[2]{Jan Petr}
\affil[1]{Vrije Universiteit Amsterdam, Amsterdam, The Netherlands}
\affil[2]{University of Passau, Passau, Germany}

\date{}

\allowdisplaybreaks

\begin{document}

\maketitle

\begin{abstract}
   We study the second largest spectral gap from $1$ of the normalized Laplacian of a graph, a quantity that appears in the literature in connection with random walks, expander graphs, and Ramanujan graphs. We relate it to the classical Cheeger and dual Cheeger constants, and we introduce a new Cheeger-type constant admitting a probabilistic interpretation in terms of two-step random walks. For this constant, we establish sharp inequalities analogous to the classical Cheeger inequalities.
\end{abstract}

\section{Introduction}\label{section:intro}

Let $G$ be a simple graph on $n$ nodes with no isolated vertices, and let 

$$
0= \lambda_1\leq  \lambda_2\leq \ldots \leq \lambda_n \leq 2 
$$
denote the eigenvalues of its normalized Laplacian. Since the foundational work of Fan Chung \cite{chung}, these eigenvalues have been extensively studied and are known to encode important geometric and combinatorial properties of graphs. In particular, the classical Cheeger inequalities relate the eigenvalue $\lambda_2$ to the Cheeger constant $h$ \cite{chung}, while the largest eigenvalue $\lambda_n$ is linked to the dual Cheeger constant $\overline{h}$ through the dual Cheeger inequalities \cite{Bauer14,Chang16c,BauerJost2013}.

In \cite{JMZ23}, the smallest spectral distance from $1$, 
$$
\varepsilon:=\min_{i\neq 1} |1-\lambda_i|
$$ was investigated, and the sharp bound $\varepsilon\leq 1/2$ was established for connected graphs on $n\geq 3$ vertices. Here, we study instead the second largest spectral distance from $1$,
$$
\tau:=\max_{i\neq 1} |1-\lambda_i|,
$$
which controls the rate of convergence of a random walk on $G$ to its stationary distribution, as explained in Remark~\ref{rmk:walks}. This quantity also appears in Fan Chung's book \cite[Chapters 1.5, 5, and 6]{chung}, in relation to random walks and expander graphs.

Among other results, in Proposition~\ref{prop:char} we show that $$\tau=\max\{1-\lambda_2,\lambda_n-1\},$$ and as a consequence we relate $\tau$ to the classical Cheeger and dual Cheeger constants in Proposition~\ref{prop:hat}. We further introduce a new Cheeger-type constant $\tilde{h}$ which admits an interpretation in terms of the probability of returning to a set of vertices after two steps (Definition~\ref{def:tildeh} and Remark~\ref{rmk:geom0}) for which we establish the following sharp inequalities (Corollary~\ref{cor:lower} and Theorem~\ref{thm:upper}):

\begin{equation*}
2\tilde{h}-1  \leq   \tau^2\leq \sqrt{1-(1-\tilh)^2}.
\end{equation*}

Our work is also related to Ramanujan graphs \cite{lubotzky1988ramanujan,murty2003ramanujan}. In fact, if $G$ is $d$-regular, then the normalized Laplacian eigenvalues satisfy $\lambda_i=1-\mu_i/d$,
 where $\mu_i$ are the eigenvalues of the adjacency matrix. Since, by definition, a Ramanujan graph is a $d$-regular graph satisfying
 $$\max_{i\,: \,\mu_i\neq \pm d}|\mu_i|\leq 2\sqrt{d-1},
 $$
Ramanujan graphs can equivalently be characterized as $d$-regular graphs such that
 $$
 \max_{i\,: \,\lambda_i\neq 0,2}|1-\lambda_i|\leq \frac{2\sqrt{d-1}}{d}.
 $$
For non-bipartite connected graphs, this is equivalent to

\begin{equation}\label{eq:Ramanujan}
    \tau\leq \frac{2\sqrt{d-1}}{d}.
\end{equation}

\subsection*{Structure of the paper}

The paper is organized as follows. In Section~\ref{section:Background}, we recall the necessary background on the normalized Laplacian and the relevant eigenvalue inequalities. In Section~\ref{section:tau}, we introduce $\tau$ and $\tilde{h}$, and we relate $\tau$ to the classical Cheeger and dual Cheeger constants. In Section~\ref{section:tilde h} we study basic properties of $\tilde{h}$ and pose two questions for future research. We then derive lower bounds for $\tau^2$
 in Section~\ref{section:lower}, and upper bounds in Section~\ref{section:upper}. Since the proof of the main upper bound is rather long, we dedicate Section~\ref{section:proof} entirely to it. 

\section{Background}\label{section:Background}

Fix a simple graph $G=(V,E)$ on $n$ nodes and $m$ edges, with no isolated vertices, and let $v_1,\ldots,v_n$ denote the vertices of $G$. The \emph{adjacency matrix} of $G$ is the $n\times n$ matrix $A:=A(G)$ with entries 
\begin{equation*}
    A_{ij}:=\begin{cases}1 &\text{ if }v_i\sim v_j\\
    0 &\text{ otherwise.}\end{cases}
\end{equation*} The \emph{normalized Laplacian} of $G$ is the matrix $$L:=L(G):=\id-D^{-1}A,$$
 where $\id$ denotes the $n\times n$ identity matrix, and $D:=D(G):=\textrm{diag}(\deg v_1,\ldots,\deg v_n)$ is the \emph{degree matrix} of $G$. We denote the eigenvalues of $L$ by
\begin{equation*}
    0=\lambda_1\leq \lambda_2\leq \ldots\leq \lambda_n\leq 2.
\end{equation*}

Now, let $C(V)$ denote the vector space of functions $f:V\rightarrow\mathbb{R}$, and, given $f,g\in C(V)$, let
    \begin{equation}\label{eq:<>}
	\langle f,g\rangle:=\sum_{v\in V}\deg v\cdot f(v)\cdot g(v).
	\end{equation}
We can see then see the normalized Laplacian $L$ as an operator $C(V)\rightarrow C(V)$ such that
\begin{equation*}
    Lf(v)=f(v)-\frac{1}{\deg v}\sum_{w\sim v}f(w).
\end{equation*}Moreover, $L$ is self-adjoint with respect to the inner product $	\langle \cdot,\cdot \rangle$, i.e.,
	\begin{equation*}
	\langle Lf,g\rangle=\langle f,Lg\rangle \quad \forall f,g\in C(V).
	\end{equation*}
    
In analogy with \cite{JMZ23} (proof of Lemma 3), we note that the values $(1-\lambda_1)^2,\ldots,(1-\lambda_n)^2$ are precisely the eigenvalues of the matrix
\begin{equation}\label{eq:defM}
    M:=(\id-L)^2=(D^{-1}A)^2,
\end{equation}
whose entries are 
\begin{equation}\label{eq:Mentries}
M_{vu}=\sum_{w\in V}(D^{-1}A)_{vw}(D^{-1}A)_{wu}=\sum_{w\in \mathcal N(v)\cap \mathcal N(u)}
\frac{1}{\deg v\,\deg w},\quad\text{ for }v,u\in V.
\end{equation}

In particular, $\varepsilon^2$ is the smallest eigenvalue of $M$, while $\tau^2$ is the second largest. Moreover, similarly to $L$, $M$ is also self-adjoint with respect to the weighted inner product in \eqref{eq:<>}, implying that its eigenvalues can be characterized using the Courant--Fischer--Weyl min-max Principle via the Rayleigh Quotients
\begin{equation*}
    \textrm{RQ}(f):=\frac{\langle Mf,f\rangle}{\langle f,f \rangle}=\frac{\sum\limits_{w\in V}\frac{1}{\deg w}\left(\sum\limits_{v\in \mathcal{N}(w)}f(v)\right)^2}{\sum\limits_{w\in V} \deg w\cdot f(w)^2},
\end{equation*}
for functions $f\in C(V) \setminus \{\mathbf 0\}$. In particular,

$$
\varepsilon^2= \min_{f\in C(V)\setminus\{ \mathbf{0}\}} \RQ(f),
$$

while

\begin{equation}\label{eq:tau2}
    \tau^2= \max_{\substack{f\in C(V)\setminus\{\mathbf{0}\}:\\\sum_{w\in V}\deg w\cdot f(w)=0}} \RQ(f).
\end{equation}

\begin{remark}\label{rmk:M}
The approach adopted here differs slightly from that in \cite{JMZ23}. In particular, instead of working with the matrix $(D^{-1/2}AD^{-1/2})^2$, here we consider $M=(D^{-1}A)^2$, together with the weighted inner product in \eqref{eq:<>}. This allows us to work within the same framework as $L$, as they are self-adjoint with respect to the same inner product. Nevertheless, the characterization obtained above for $\varepsilon^2$ is equivalent to that of Lemma 3 in \cite{JMZ23}. The characterization of $\tau^2$ can be derived analogously by observing that $\tau^2$ is the second largest eigenvalue of $M$, while the eigenfunctions corresponding to the largest eigenvalue of $M$ (which is $(1-\lambda_1)^2=1$) are precisely the constant functions.  Hence, by the Courant--Fischer--Weyl min-max Principle, $\tau^2$ is obtained by maximizing the Rayleigh Quotient over all functions that are orthogonal to the constant functions with respect to the inner product in \eqref{eq:<>}, namely over all $f\in C(V)$ such that
\[
\langle f,\mathbf{1}\rangle
=
\sum_{w\in V}\deg w \cdot f(w)
=
0.
\]
\end{remark}

\begin{remark}\label{rmk:walks}
    Let $T:=D^{-1}A$. Then, 
$$
T_{vu}=\begin{cases}\frac{1}{\deg v} &\text{ if }v\sim u\\
    0 &\text{ otherwise.}\end{cases}
$$

Therefore, $T$ is the transition probability matrix of a random walk on $G$, while $M=T^2$ is the two-step transition probability matrix of the random walk. In particular, $M_{vu}$ in \eqref{eq:Mentries} is the probability that a random walk starting at $v$ is at $u$ after two steps. Moreover, since $\tau$ is the absolute value of the second largest eigenvalue of $T$, it controls the rate of convergence of a random walk on $G$ to its stationary distribution (see for instance Section 2.2 in \cite{guruswami2016rapidly}, Section 12.2 in \cite{levin2017markov}, or Section 9 in \cite{BauerJost2013}). 
\end{remark}

We now recall the classical geometric quantities associated with $\lambda_2$ and $\lambda_n$, namely the Cheeger and dual Cheeger constants, together with the corresponding inequalities relating them to $\lambda_2$ and $\lambda_n$. To this end, we first introduce some preliminary notation.

\begin{definition}
Let $U,S\subseteq V$. We denote by
\[
E(U,S):=\Bigl\{\{u,s\}\in E:u\in U,\ s\in S\Bigr\}
\]
the set of edges between $U$ and $S$, and we let
\[
e(U,S):=|E(U,S)|.
\]
Similarly, for $w\in V$, we define $E(w,S):=E(\{w\},S)$ and we let $e(w,S):=|E(w,S)|$ denote its cardinality. Moreover, we let $\bar S:=V\setminus S$ denote the \emph{complement} of $S$, and we let 
$$
\vol(S):=\sum_{v\in S}\deg v
$$
be the \emph{volume} of $S$. Finally, the \emph{neighborhood} of $S$ is defined as
\[
\mathcal{N}(S):=\{w\in V:E(w,S)\neq\emptyset\}.
\]
\end{definition}

We can now define the Cheeger and dual Cheeger constants.

\begin{definition}\label{def:h}
The \emph{Cheeger constant} of $G$ is

\begin{equation*}
    h:=h(G):=\min_{\emptyset\neq S\subsetneq V}\frac{e(S,\overline{S})}{\min\{\vol(S),\vol(\overline{S})\}}.
\end{equation*}
    The \emph{dual Cheeger constant} of $G$ is
	   \begin{equation*}
	       \bar{h}:= \bar{h}(G):=\max_{\substack{\text{partitions}\\ V=S_1\sqcup S_2 \sqcup S_3\, :\\ S_1,S_2\neq\emptyset}} \frac{2\cdot e(S_1,S_2)}{\vol(S_1)+\vol(S_2)}.
	   \end{equation*}
\end{definition}

\begin{remark}
A set $S$ realizing the Cheeger constant corresponds to a sparse edge cut $E(S,\overline{S})$ which separates the graph into two parts of comparable volume. Thus, small values of $h$ indicate that the graph is close to being disconnected. Moreover, we always have $0\leq h\leq 1$, with $h=0$ if and only if $G$ is disconnected.

 Similarly, a partition $V=S_1\sqcup S_2\sqcup S_3$ realizing the dual Cheeger constant corresponds to a large set of edges between $S_1$ and $S_2$, with relatively few edges inside $S_1$ and $S_2$. Thus,  large values of $\bar h$ indicate that a large portion of the graph is close to bipartite. Moreover, by Theorem 3.1 in \cite{BauerJost2013}, since we are not admitting isolated vertices, we always have $0< \bar h\leq 1$, and $\bar h=1$ if and only if $G$ is bipartite.
\end{remark}

The Cheeger inequalities \cite{chung,book_Jost_Mulas_Zhang} state that
\begin{equation}\label{eq:Cheeger}
1-\sqrt{1-h^2} \leq \lambda_2 \leq 2h,
\end{equation}

while the dual Cheeger inequalities \cite{Bauer14,Chang16c,BauerJost2013} state that

\begin{equation}\label{eq:Dual-Cheeger}
2\overline h \leq \lambda_n \leq 1 + \sqrt{1-\left(1-\overline h\right)^2}.
\end{equation}

We conclude this section by introducing additional notation that will be used throughout the paper.

\begin{definition}\label{def:indicator}
    Let $U\subseteq V$. The \emph{indicator function} $\mathbbm{1}_U\colon V\to \R$ is defined as
    \[
    \mathbbm{1}_U(v)\coloneqq \begin{cases}
        1, &\text{ if } v \in U,\\
        0, &\text{ if } v \notin U.
    \end{cases}
    \]
\end{definition}

\section{The quantities \texorpdfstring{$\tau$}{t} and \texorpdfstring{$\tilde{h}$}{h}}\label{section:tau}

In this section, we study basic properties of $\tau$ and introduce the new Cheeger-type constant $\tilde{h}$. We again let $G=(V,E)$ be a simple graph on $n$ nodes with no isolated vertices, and we let 
$$
0= \lambda_1\leq  \lambda_2\leq \ldots \leq \lambda_n \leq 2 
$$
denote the eigenvalues of its normalized Laplacian $L$. Recall that, in the Introduction, we defined
$$
\tau:=\max_{i\neq 1} |1-\lambda_i|.
$$
Moreover, in \eqref{eq:tau2},  we established the characterization 
$$
\tau^2= \max_{\substack{f\in C(V):\\\sum_{w\in V}\deg w\cdot f(w)=0}} \RQ(f).
$$

We also note that an estimate for $\tau$ was obtained in \cite[Proposition 2]{JMZ23}, where it was shown that
\[
\frac{1}{n-1}\leq \tau\leq 1.
\]

We now characterize $\tau$ as follows.

\begin{proposition}\label{prop:char}
    For every graph $G$, $$\tau=\max\{1-\lambda_2,\lambda_n-1\}.$$
\end{proposition}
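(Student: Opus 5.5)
The proof is elementary and rests only on the ordering $0=\lambda_1\leq\lambda_2\leq\ldots\leq\lambda_n\leq 2$. We need to show that the maximum of $|1-\lambda_i|$ over $i\neq 1$ is attained either at $i=2$ in the form $1-\lambda_2$, or at $i=n$ in the form $\lambda_n-1$.

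First, for each $i\in\{2,\ldots,n\}$ we split according to the sign of $1-\lambda_i$.
\begin{itemize}[leftmargin=*]
\item If $\lambda_i\leq 1$, then $|1-\lambda_i|=1-\lambda_i\leq 1-\lambda_2$, since $\lambda_2\leq\lambda_i$.
\item If $\lambda_i\geq 1$, then $|1-\lambda_i|=\lambda_i-1\leq\lambda_n-1$, since $\lambda_i\leq\lambda_n$.
\end{itemize}
In either case $|1-\lambda_i|\leq\max\{1-\lambda_2,\lambda_n-1\}$, so $\tau\leq\max\{1-\lambda_2,\lambda_n-1\}$.

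For the reverse inequality, note that $1-\lambda_2\leq|1-\lambda_2|\leq\tau$ and $\lambda_n-1\leq|1-\lambda_n|\leq\tau$. Both indices $2$ and $n$ differ from $1$, which requires $n\geq 2$; this holds because $G$ has no isolated vertices. Hence $\max\{1-\lambda_2,\lambda_n-1\}\leq\tau$, and the two bounds give the claimed equality.

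A signed quantity such as $1-\lambda_2$ could be negative, but this causes no difficulty. The maximum of the two terms is always nonnegative: if $\lambda_2>1$, then $\lambda_n\geq\lambda_2>1$, so $\lambda_n-1>0$. The only point to handle explicitly is the degenerate case $n=2$, where $\lambda_2=\lambda_n$; the argument above covers it without change.
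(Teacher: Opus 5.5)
Your proof is correct, but it takes a different and somewhat more economical route than the paper. The paper first reduces $\tau$ to $\max\{|1-\lambda_2|,|1-\lambda_n|\}$ using the ordering. It then invokes the trace identity $\sum_{i=1}^n\lambda_i=n$ to show $\lambda_n>1$, so that $|1-\lambda_n|=\lambda_n-1$. Finally it splits into the cases $\lambda_2\le 1$ and $\lambda_2>1$.

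You instead prove the two inequalities directly, essentially applying $|x|=\max\{x,-x\}$ termwise. Each $|1-\lambda_i|$ is dominated by $1-\lambda_2$ or by $\lambda_n-1$, and conversely each of these two signed quantities is dominated by $\tau$. This uses nothing beyond the ordering of the eigenvalues and $n\ge 2$, so it holds for any finite ordered list of reals. The paper's argument relies on the specific spectral fact $\sum_i\lambda_i=n$.

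What the paper's route buys is the extra, unconditional information $\lambda_n>1$, and hence $\tau\ge\lambda_n-1>0$. Your argument only shows that the maximum is nonnegative, which the statement does not even require.
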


\begin{proof}
    By definition of $\tau$, and since $0=\lambda_1\leq \lambda_2\leq \ldots\leq \lambda_n\leq 2$, we have that
    $$
    \tau=\max \left\{|1-\lambda_2|,\ldots,|1-\lambda_n|\right\}=\max \left\{|1-\lambda_2|,|1-\lambda_n|\right\}.
    $$

Moreover, since $\sum_{i=1}^n \lambda_i=n$ for all graphs, it follows that $\lambda_n>1$. In fact, if $\lambda_n\leq 1$, then $\lambda_i\leq 1$ for all $i$, and therefore

$$
\sum_{i=1}^n \lambda_i=\sum_{i=2}^n \lambda_i\leq n-1<n,
$$ which is a contradiction. Hence,
\begin{equation}\label{eq:AA}
    \tau=\max \left\{|1-\lambda_2|,\lambda_n-1\right\}.
\end{equation}
 We now consider two cases.

\begin{itemize}
    \item Case 1: $\lambda_2\leq 1$. In this case, $|1-\lambda_2|=1-\lambda_2$, and therefore, by \eqref{eq:AA},
    $$\tau=\max\{1-\lambda_2,\lambda_n-1\}.$$
    \item Case 2: $\lambda_2 > 1$. In this case, $|1-\lambda_2|=\lambda_2-1\leq \lambda_n-1$. Therefore, by \eqref{eq:AA},
    $$\tau=\max\{|1-\lambda_2|,\lambda_n-1\}=\lambda_n-1.$$
    Moreover, since $1-\lambda_2< 0$ in this case, it is also true that
    $$
    \tau=\lambda_n-1=\max\{1-\lambda_2,\lambda_n-1\}.
    $$
\end{itemize}

This proves the claim.
    
\end{proof}

Now, let $\widehat{h}:=\min\left\{h,1-\bar h\right\}$. Combining the Cheeger inequalities in \eqref{eq:Cheeger} 
with the dual Cheeger inequalities in \eqref{eq:Dual-Cheeger}, we can infer the following.

\begin{proposition}\label{prop:hat}
 For every graph,
$$
1-\sqrt{1-\widehat{h}^2}\leq 1-\tau\leq 2\widehat{h},
$$
or equivalently,

$$
1-2\widehat{h}\leq \tau \leq \sqrt{1-\widehat{h}^2}.
$$

\end{proposition}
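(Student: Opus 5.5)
We combine Proposition~\ref{prop:char} with the Cheeger inequalities \eqref{eq:Cheeger} and the dual Cheeger inequalities \eqref{eq:Dual-Cheeger}, after rewriting each in terms of $1-\lambda_2$ and $\lambda_n-1$. By Proposition~\ref{prop:char}, $1-\tau=\min\{\lambda_2,\,2-\lambda_n\}$, so it suffices to bound each of $\lambda_2$ and $2-\lambda_n$ from above and below by functions of $h$ and $1-\bar h$ respectively.

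\emph{Rewriting the two inequalities.} From \eqref{eq:Cheeger} we have $1-\sqrt{1-h^2}\le \lambda_2\le 2h$. From \eqref{eq:Dual-Cheeger}, subtracting from $2$, we get
\[
1-\sqrt{1-(1-\bar h)^2}\;\le\; 2-\lambda_n\;\le\; 2(1-\bar h).
\]
Write $g(x):=1-\sqrt{1-x^2}$, which is increasing on $[0,1]$. Then both quantities $a:=\lambda_2$ and $b:=2-\lambda_n$ satisfy $g(x)\le \cdot\le 2x$, with $x=h$ for $a$ and $x=1-\bar h$ for $b$. Both $h$ and $1-\bar h$ lie in $[0,1]$ by the remark following Definition~\ref{def:h}.

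\emph{Taking minima.} The key elementary fact is that if $g$ and $x\mapsto 2x$ are increasing, then $g(x_1)\le a\le 2x_1$ and $g(x_2)\le b\le 2x_2$ imply
\[
g(\min\{x_1,x_2\})\le \min\{a,b\}\le 2\min\{x_1,x_2\}.
\]
For the upper bound, $\min\{a,b\}\le a\le 2x_1$ and $\min\{a,b\}\le b\le 2x_2$. For the lower bound, $a\ge g(x_1)\ge g(\min\{x_1,x_2\})$ by monotonicity, and similarly for $b$. Applying this with $x_1=h$ and $x_2=1-\bar h$ gives $1-\sqrt{1-\widehat h^2}\le 1-\tau\le 2\widehat h$. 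Rearranging yields the equivalent form $1-2\widehat h\le\tau\le\sqrt{1-\widehat h^2}$.

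\emph{Main point of care.} There is no real obstacle. The only points requiring attention are the correct conversion of the dual Cheeger inequality into bounds on $2-\lambda_n$, and checking that $g$ is monotone on the relevant range $[0,1]$, which needs $\widehat h\in[0,1]$.
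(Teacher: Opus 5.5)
Your proposal is correct and follows the paper's proof essentially step for step. Like the paper, you write $1-\tau=\min\{\lambda_2,\,2-\lambda_n\}$ via Proposition~\ref{prop:char}, turn the dual Cheeger inequalities into bounds on $2-\lambda_n$, and pass to the minimum using monotonicity of $x\mapsto 1-\sqrt{1-x^2}$ on $[0,1]$; the paper packages this last step as Remarks~\ref{rmk:min} and~\ref{rmk:min2}.
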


The proof of Proposition~\ref{prop:hat} uses the following observations.

\begin{remark}\label{rmk:min}
    If $a\leq A$ and $b\leq B$, then $ \min\{a,b\}\leq a\leq A$ and $ \min\{a,b\}\leq b\leq B$, therefore
    $$
    \min\{a,b\}\leq \min\{A,B\}.
    $$
\end{remark}

\begin{remark}\label{rmk:min2}
If a function $f$ is increasing on an interval $I$,  then for all $a,b\in I$,

$$
\min\{f(a),f(b)\}=f(\min\{a,b\}).
$$

\end{remark}

\begin{proof}[Proof of Proposition~\ref{prop:hat}]
Since the two statements are clearly equivalent, it suffices to prove the first one. By Proposition~\ref{prop:char},
  $\tau=\max\{1-\lambda_2,\lambda_n-1\}$,  implying that
  $$1-\tau=\min\{\lambda_2,2-\lambda_n\}.$$

Moreover, by \eqref{eq:Dual-Cheeger}, we have that

\begin{equation}\label{eq:Dual-Cheeger2}
1 - \sqrt{1-\left(1-\overline h\right)^2}\leq 2-\lambda_n \leq 2(1-\overline h).
\end{equation}

By \eqref{eq:Cheeger} and \eqref{eq:Dual-Cheeger2}, together with Remark~\ref{rmk:min}, we infer that

$$
1-\tau= \min\{\lambda_2,2-\lambda_n\}\leq \min\{2h,2(1-\overline h)\}=2\widehat{h}
$$

and

$$
1-\tau= \min\{\lambda_2,2-\lambda_n\}\geq \min\left\{1-\sqrt{1-h^2},1 - \sqrt{1-\left(1-\overline h\right)^2}\right\}.
$$

Thus, it remains to show that
$$
\min\left\{1-\sqrt{1-h^2},1 - \sqrt{1-\left(1-\overline h\right)^2}\right\}=1-\sqrt{1-\widehat{h}^2},
$$  This follows from Remark~\ref{rmk:min2}, together with the fact that the function $$f(x):=1-\sqrt{1-x^2}$$ is increasing on $[0,1]$, which implies that

$$
\min\{f(h),f(1-\overline h)\}=f(\min\{h,1-\overline h\})=f(\widehat{h}).
$$

\end{proof}

Proposition~\ref{prop:hat} bounds $\tau$ in terms of $\widehat{h}:=\min\left\{h,1-\bar h\right\}$, and therefore in terms of both $h$ and $\bar h$. Our goal is instead to establish sharp inequalities for $\tau$ (or $\tau^2$) involving a single geometric quantity. To this end, we introduce the following new Cheeger-type constant.

\begin{definition}\label{def:tildeh}
    Given a nonempty set $S\subseteq V$, we let

\begin{equation*}
    \tilde{h}(S):=\frac{1}{\vol S}\cdot \sum_{w\in V}\frac{e(w, S)^2}{\deg w}=\frac{1}{\vol S}\cdot \sum_{w\in \mathcal{N}(S)}\frac{e(w, S)^2}{\deg w}.
\end{equation*}

Moreover, we let

\begin{equation*}
\tilde{h}:=\max_{\substack{\emptyset\neq S \subseteq V:\\ \vol S\leq \vol V/2}} \tilde{h}(S).
\end{equation*}

\end{definition}

\begin{remark}\label{rmk:geom0}
 The quantity $\tilde{h}(S)$ becomes large when the vertices in $\mathcal{N}(S)$ devote a large proportion of their degrees to edges connecting to $S$. Moreover,
$$
\tilde{h}(S)=\sum_{w\in V}\frac{e(w, S)}{\vol S}\cdot \frac{e(w, S)}{\deg w},
$$
 where $e(w,S)/\vol S$ is the probability that a one-step random walk starting from $S$ reaches $w$, assuming that the starting vertex in $S$ is selected with probability proportional to its degree,  while $e(w,S)/\deg w$ is the probability of returning from $w$ to $S$ in one step. Therefore, the quantity $\tilde{h}(S)$ can be interpreted as the probability that a random walk starting in $S$ returns to $S$ after two steps. This probabilistic interpretation suggests a deep connection between $\tilde h$ and $\tau^2$ in view of Remark~\ref{rmk:walks}. Moreover, the Cheeger constant of $G$ (Definition \ref{def:h}) can be written as
\begin{equation*}
    h=\min_{\substack{\emptyset\neq S \subseteq V:\\ \vol S\leq \vol V/2}}
    h(S),
\end{equation*}where
\begin{equation*}
    h(S):=
    \frac{e(S,\overline{S})}{\vol(S)}
\end{equation*}
is the probability that a one-step random walk starting from $S$ leaves $S$, and therefore $1-h(S)$ is the probability that a one-step random walk starting from $S$ remains in $S$. Hence, $\tilde h$ may be seen as a two-step analogue of $1-h$.
\end{remark} 

In Section~\ref{section:tilde h}, we investigate several properties of $\tilde{h}$. Sections~\ref{section:lower},~\ref{section:upper} and~\ref{section:proof} are devoted to proving the following sharp inequalities:

\begin{equation*}
2\tilde{h}-1  \leq   \tau^2\leq \sqrt{1-(1-\tilh)^2}.
\end{equation*}

\section{Properties of \texorpdfstring{$\tilde{h}$}{h}}\label{section:tilde h}

This section is devoted to investigating properties of $\tilde{h}$. We begin by proving the sharp upper bound $\tilde{h}\leq 1$, before deriving several lower bounds, discussing a few examples and posing two open questions. Also in this section, all graphs are assumed to be simple and to have no isolated vertices.

\begin{proposition}\label{prop:h1}
    For any graph,
    $$
    \tilde{h}\leq 1,
    $$
with equality if and only if $G$ is bipartite or disconnected.
\end{proposition}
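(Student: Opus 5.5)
The plan is to prove the bound $\tilde h(S)\le 1$ for every nonempty $S$ directly, and then to characterize when some admissible $S$ (one with $\vol S\le \vol V/2$) attains it. For the inequality, I will use the trivial estimate $e(w,S)\le \deg w$ for every $w\in V$. This gives
\[
\sum_{w\in V}\frac{e(w,S)^2}{\deg w}\le \sum_{w\in V} e(w,S)=\sum_{s\in S}\deg s=\vol S ,
\]
where the middle equality counts each edge incident to $S$ from its other endpoint. Since each vertex $s\in S$ contributes exactly its $\deg s$ edges, this gives $\tilde h(S)\le 1$ for all $S$, and hence $\tilde h\le 1$.

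Equality $\tilde h(S)=1$ holds if and only if $e(w,S)^2=\deg w\cdot e(w,S)$ for every $w$. This means that for each $w$ either $e(w,S)=0$ or $e(w,S)=\deg w$. Equivalently, every vertex of $\mathcal N(S)$ has all its neighbours in $S$, i.e.\ $\mathcal N(\mathcal N(S))\subseteq S$. So $\tilde h=1$ if and only if some nonempty $S$ with $\vol S\le\vol V/2$ has this closure property.

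For the direction ``$\Leftarrow$'', I will exhibit such an $S$ in each case.
\begin{itemize}
\item If $G$ is disconnected, I take $S$ to be a connected component of minimal volume, so $\vol S\le \vol V/2$. Then $\mathcal N(S)=S$ (no isolated vertices), and every neighbour of $S$ lies in $S$.
\item If $G$ is bipartite with parts $X\sqcup Y$, then $\vol X=\vol Y=m=\vol V/2$, since every edge has exactly one endpoint in each part. Taking $S=X$, every $w\in\mathcal N(X)\subseteq Y$ has all its neighbours in $X$.
\end{itemize}

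For ``$\Rightarrow$'', suppose $S$ satisfies the closure property and $G$ is connected; I need to show $G$ is bipartite. Set $N:=\mathcal N(S)$. The closure property gives $\mathcal N(N)\subseteq S$, and since $S\subseteq \mathcal N(N)$ (no isolated vertices) we get $\mathcal N(N)=S$. Hence $S\cup N$ is closed under taking neighbours, and by connectivity $S\cup N=V$.

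The main obstacle is to rule out $S\cap N\neq\emptyset$, and here the volume condition enters. If $S\cap N=\emptyset$, then $V=S\sqcup N$, with no edges inside $S$ (a neighbour of $S$ lies in $N$) and none inside $N$. So $G$ is bipartite. If $S\cap N\neq\emptyset$, I claim $N=S$: every neighbour of a vertex of $S$ lies in $N$, and every neighbour of a vertex of $N$ lies in $S$. If $S\neq N$, I would use connectivity to find an edge between $S\cap N$ and $V\setminus (S\cap N)$. Its endpoint $u\in S\cap N$ has all its neighbours in $N$ (as $u\in S$) and in $S$ (as $u\in N$), so the other endpoint also lies in $S\cap N$, a contradiction. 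Thus $S\cap N$ is a nonempty closed set, so $S\cap N=V$ by connectivity, giving $S=V$. This contradicts $\vol S\le \vol V/2$, since $\vol V>0$. This completes the characterization.
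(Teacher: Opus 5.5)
Your proof is correct, but it takes a different route from the paper.

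The paper argues spectrally. It notes that $\tilde h(S)=\RQ(\mathbbm{1}_S)$ is the Rayleigh quotient of the indicator function for $M=(D^{-1}A)^2$, so $\tilde h(S)\le 1$ follows because all eigenvalues of $M$ lie in $[0,1]$. For equality, it uses that $\RQ(\mathbbm{1}_S)=1$ with $\mathbbm{1}_S\neq\mathbf 1$ forces a non-constant eigenfunction of $M$ for eigenvalue $1$. Hence the normalized Laplacian either has $0$ with multiplicity at least two (disconnected) or has eigenvalue $2$ (a bipartite component).

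You instead work termwise, using $e(w,S)^2/\deg w\le e(w,S)$ and the double count $\sum_{w}e(w,S)=\vol S$. This gives a sharper combinatorial statement: $\tilde h(S)=1$ exactly when every vertex has either none or all of its neighbours in $S$. Your closure-under-neighbours argument with connectivity then replaces the spectral facts about the multiplicities of $0$ and $2$.

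Your route buys three things. It is elementary and self-contained. It makes the ``if'' direction explicit, whereas the paper leaves the choice of an admissible $S$ largely implicit and relies on Example~\ref{ex:bip} for the bipartite case. And it shows exactly where $\vol S\le\vol V/2$ is needed, namely to exclude $S=V$. The paper's route, in exchange, establishes $\tilde h(S)=\RQ(\mathbbm{1}_S)$, which ties $\tilde h$ directly to the operator whose second largest eigenvalue is $\tau^2$; the later lower and upper bounds exploit that viewpoint.

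One small expository point: in your final case the detour through the claim $N=S$ is unnecessary. It suffices to note that $S\cap N$ is nonempty and closed under taking neighbours, so it equals $V$ by connectivity, giving $S=V$.
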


\begin{proof}
    Let $S \subset V$ be a nonempty subset such that $\vol S \leq \vol V/2$.
    Then,
    \begin{align*}
        \RQ(\mathbbm{1}_S) &= \frac{\sum\limits_{w\in V}\frac{1}{\deg w}\left(\sum\limits_{v\in \mathcal{N}(w)}\mathbbm{1}_S(v)\right)^2}{\sum\limits_{w\in V} \deg w\cdot f(w)^2}\\
        &= \frac{\sum\limits_{w\in V}\frac{1}{\deg w}\cdot e(w,S) ^2}{\sum\limits_{w\in S} \deg w}\\
        &= \frac1{\vol S} \sum\limits_{w\in V}\frac{e(w,S)^2}{\deg w}\\
        &= \tilde h(S).
    \end{align*}
    Since $0 \leq \RQ(f)\leq 1$ for all functions $f\in C(V)\setminus\{\mathbf 0\}$, it follows immediately that $\tilde h(S)\leq 1$, and consequently, $\tilde h \leq 1$.

    Moreover, $\tilde h = 1$ if and only if there exists a nonempty set of vertices $S$ such that $\vol S\leq \vol V/2$ and $\RQ(\mathbbm{1}_S)=1$. As $\mathbbm{1}_S \neq \mathbf{1}$, this can occur if and only if either the normalized Laplacian admits $0$ as an eigenvalue with multiplicity at least $2$, in which case the graph $G$ is disconnected, or the normalized Laplacian has eigenvalue $2$, in which case a connected component of $G$ is bipartite. In the case where the graph is connected, it must therefore be bipartite, and the statement follows.
\end{proof}

\begin{remark}
Proposition \ref{prop:h1} is consistent with the probabilistic interpretation of $\tilde h$ discussed in Remark \ref{rmk:geom0}. In fact, since $\tilde h(S)$ is the probability that a random walk starting in $S$ returns to $S$ after two steps, it is clear that $\tilde h(S)\leq 1$. Moreover, equality can occur only when the random walk returns to $S$ after two steps with probability $1$, which corresponds precisely to disconnected or bipartite cases.
\end{remark}

The main lower bound for $\tilde{h}$ in this section is the following.

\begin{proposition}\label{prop:N(S)}
    For any graph,
    \begin{equation*}
    \tilde{h}\geq \max_{\substack{\emptyset\neq S \subseteq V:\\ \vol S\leq \vol V/2}} \frac{\vol S}{\vol \mathcal{N}(S)}.
\end{equation*} 
\end{proposition}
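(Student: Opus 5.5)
It suffices to show that $\tilde h(S)\geq \vol S/\vol\mathcal N(S)$ for every nonempty $S$ with $\vol S\leq \vol V/2$, since then taking the maximum over all such $S$ on both sides gives the claim. The natural tool is the Cauchy--Schwarz inequality applied to the sum in Definition~\ref{def:tildeh}, restricted to $\mathcal N(S)$, where $e(w,S)>0$.

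First observe that counting edge endpoints in $S$ gives
\[
\sum_{w\in \mathcal N(S)} e(w,S)=\sum_{w\in V}e(w,S)=\vol S,
\]
because each vertex $s\in S$ contributes exactly $\deg s$ pairs $(w,s)$ with $w\sim s$, and every such $w$ lies in $\mathcal N(S)$. Next, write $e(w,S)=\frac{e(w,S)}{\sqrt{\deg w}}\cdot\sqrt{\deg w}$ and apply Cauchy--Schwarz over $w\in\mathcal N(S)$:
\[
(\vol S)^2=\Bigl(\sum_{w\in\mathcal N(S)} e(w,S)\Bigr)^2\leq \Bigl(\sum_{w\in\mathcal N(S)}\frac{e(w,S)^2}{\deg w}\Bigr)\Bigl(\sum_{w\in\mathcal N(S)}\deg w\Bigr)=\tilde h(S)\,\vol S\cdot \vol\mathcal N(S).
\]
Dividing by $\vol S\cdot\vol\mathcal N(S)>0$ yields $\tilde h(S)\geq \vol S/\vol\mathcal N(S)$. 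Since $G$ has no isolated vertices, $\mathcal N(S)$ is nonempty and has positive volume whenever $S$ is nonempty, so the division is legitimate.

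There is no real obstacle here. The only point requiring care is the double-counting identity $\sum_w e(w,S)=\vol S$. This identity holds even when $w\in S$ and edges lie inside $S$, because $e(w,S)$ counts the edges from $w$ into $S$, and summing over $w$ counts each edge endpoint in $S$ exactly once.
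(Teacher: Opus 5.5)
Your proof is correct and is essentially the paper's argument: the paper applies Sedrakyan's inequality $\sum_i u_i^2/v_i \geq \bigl(\sum_i u_i\bigr)^2/\sum_i v_i$ with $u_w=e(w,S)$ and $v_w=\deg w$ over $w\in\mathcal N(S)$, which is exactly your weighted Cauchy--Schwarz step, together with the identity $\sum_{w}e(w,S)=\vol S$. Your explicit checks of the double-counting and of $\vol\mathcal N(S)>0$ supply details that the paper leaves implicit.
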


\begin{remark}\label{rmk:geom} The quantity $\vol S/ \vol \mathcal{N}(S)$ in Proposition~\ref{prop:N(S)} becomes larger when the vertices in $\mathcal{N}(S)$ devote a larger fraction of their degrees to edges connecting to $S$. This is in line with Remark~\ref{rmk:geom0}, and helps us understand the geometric intuition behind $\tilde{h}$. This quantity also appears in Lemma 6.2 in \cite{chung}, which shows that, if $G$ is not a complete graph and $S\subseteq V$, then
$$
\tau^2+(1-\tau^2)\cdot \frac{\vol S}{\vol V}=1-(1-\tau^2)\cdot \frac{\vol \overline{S}}{\vol V}>\frac{\vol S}{\vol \mathcal{N}(S)}.
$$ In particular, if $\vol S\leq \vol V/2$, this implies that
$$
\frac{\tau^2+1}{2}>\frac{\vol S}{\vol \mathcal{N}(S)},
$$ therefore
$$
\frac{\tau^2+1}{2}>\max_{\substack{\emptyset\neq S \subseteq V:\\ \vol S\leq \vol V/2}} \frac{\vol S}{\vol \mathcal{N}(S)}.
$$
Interestingly, Corollary \ref{cor:lower} below will show that $(\tau^2+1)/2\geq \tilde{h}$. 

\end{remark}

\begin{proof}[Proof of Proposition~\ref{prop:N(S)}]
        By Sedrakyan's inequality, given real numbers $u_1,\ldots,u_n$ and positive real numbers $v_1,\ldots,v_n$,
$$
\sum_{i}\frac{u_i^2}{v_i}\geq \frac{\left(\sum_i u_i\right)^2}{\sum_{i} v_i}.
$$

Therefore, given a nonempty set $S\subseteq V$,   
\begin{align*}
    \tilde{h}(S)&=\frac{1}{\vol S}\cdot\sum_{w\in \mathcal{N}(S)}\frac{e(w, S)^2}{\deg w}\\ &\geq \frac{1}{\vol S}\cdot \frac{\left(\sum_{w\in \mathcal{N}(S)}e(w, S)\right)^2}{\sum_{w\in \mathcal{N}(S)} \deg w}\\ &=\frac{\vol S}{\vol \mathcal{N}(S)},
\end{align*}
 implying that
\begin{equation*}
    \tilde{h}\geq \max_{\substack{\emptyset\neq S \subseteq V:\\ \vol S\leq \vol V/2}} \frac{\vol S}{\vol \mathcal{N}(S)}.
\end{equation*} 
\end{proof}

\begin{example}\label{ex:bip}
Consider a bipartite graph, and fix the sets $S$ and $\overline{S}$ that give the bipartition. Then, $\overline{S}=\mathcal{N}(S)$, and $\vol S=\vol \mathcal{N}(S)$. Therefore, by Proposition~\ref{prop:N(S)}, $\tilde{h}\geq 1$, while by Proposition~\ref{prop:h1} we have $\tilde{h}\leq 1$. This implies that $\tilde{h}= 1$, and both bounds are sharp in this case.
\end{example}

\begin{remark}\label{rmk:bip}
    If $G$ is bipartite, then $\lambda_2\geq 2-\lambda_n=0$, implying that $1-\tau=\min\{\lambda_2,2-\lambda_n\}=0$, and therefore $\tau=1$. By Example~\ref{ex:bip}, this implies that $\tilde{h}=\tau=1$ for bipartite graphs.
\end{remark}

Proposition~\ref{prop:N(S)} has the following corollaries.

\begin{corollary}
    If there exists a set $S\subseteq V$ such that $\vol S = \vol V/2$, then
    \[
    \tilde h \geq \frac12.
    \]
\end{corollary}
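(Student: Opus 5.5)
We apply Proposition~\ref{prop:N(S)} to the set $S$ given by the hypothesis. Since $\vol S=\vol V/2$, this $S$ is admissible in the maximum: it is nonempty (no isolated vertices, so $\vol V>0$) and satisfies $\vol S\leq \vol V/2$. Hence
\[
\tilde h\geq \frac{\vol S}{\vol \mathcal N(S)}.
\]
It therefore suffices to show that $\vol \mathcal N(S)\leq \vol V$. This holds trivially, because $\mathcal N(S)\subseteq V$ and all degrees are nonnegative. Consequently
\[
\tilde h\geq \frac{\vol V/2}{\vol V}=\frac12.
\]

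If one wants an argument avoiding the crude bound $\vol\mathcal N(S)\le\vol V$, one could also test $S$ or $\overline S$, since both have volume exactly $\vol V/2$ and are admissible. This is not needed, however.

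There is no real obstacle here. The only point to check is admissibility: $S\neq\emptyset$ follows from $\vol S=\vol V/2>0$.
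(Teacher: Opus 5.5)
Your proof is correct and essentially identical to the paper's. Both apply Proposition~\ref{prop:N(S)} to the given $S$ and use $\vol \mathcal N(S)\leq \vol V$ to get $\tilde h\geq \frac{\vol V/2}{\vol V}=\frac12$; you also check admissibility ($S\neq\emptyset$ since $\vol V>0$), which the paper leaves implicit.
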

\begin{proof}
    By Proposition~\ref{prop:N(S)}, we have that
    \begin{align*}
        \tilde h &\ge \tilde h(S)\\
        &\ge \frac{\vol S}{\vol \mathcal N(S)}\\
        &\ge \frac{\vol S}{\vol V}\\
        &= \frac12.
    \end{align*}
\end{proof}

\begin{corollary}\label{cor:regulareven}
    If $G$ is $k$-regular and $n$ is even, then
    \[
    \tilde h \geq \frac12.
    \]
\end{corollary}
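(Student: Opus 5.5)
The plan is to reduce Corollary~\ref{cor:regulareven} to the preceding corollary. That corollary gives $\tilde h\ge \tfrac12$ as soon as some set $S\subseteq V$ has $\vol S=\vol V/2$. So it suffices to exhibit such a set when $G$ is $k$-regular and $n$ is even.

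In a $k$-regular graph every vertex has degree $k$. Hence $\vol S = k|S|$ for every $S\subseteq V$, and in particular $\vol V = kn$. Since $n$ is even, we may pick any subset $S\subseteq V$ with exactly $|S| = n/2$ vertices. Such a set is nonempty, because $G$ has no isolated vertices and so $n\ge 2$. It satisfies
\[
\vol S = k\cdot \frac n2 = \frac{kn}{2} = \frac{\vol V}{2}.
\]
Because $\vol S\le \vol V/2$, the set $S$ is admissible in Definition~\ref{def:tildeh}. The preceding corollary then applies directly and yields $\tilde h\ge \tilde h(S)\ge \tfrac12$.

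If one prefers to avoid the previous corollary, the same conclusion follows from Proposition~\ref{prop:N(S)} and the trivial bound $\vol\mathcal N(S)\le \vol V$:
\[
\tilde h\ \ge\ \frac{\vol S}{\vol \mathcal N(S)}\ \ge\ \frac{\vol S}{\vol V}\ =\ \frac12 .
\]
There is no real obstacle. The only points to check are that $n\ge 2$, so that $S$ is nonempty, and that regularity turns the volume condition into a cardinality condition, which evenness of $n$ lets us meet exactly.
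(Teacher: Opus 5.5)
Your proposal is correct and is essentially the paper's argument. The paper states this corollary without a written proof, placing it immediately after the corollary for sets with $\vol S=\vol V/2$; choosing any $n/2$ vertices, which have volume $kn/2=\vol V/2$ by regularity, and invoking that corollary (equivalently, Proposition~\ref{prop:N(S)} with $\vol\mathcal N(S)\le\vol V$) is exactly the intended reasoning.
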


The next proposition gives a lower bound for $\tilde h$ whenever there exists a nonempty set $S\subseteq V$ such that $\vol S\leq \vol V/2$ and the vertices in $\mathcal N(S)$ devote the same proportion of their degrees to edges connecting to $S$.

\begin{proposition}\label{prop:equit}
    Assume that there exist a nonempty set $S\subseteq V$ such that $\vol S\leq \vol V/2$ and a constant $\nu\in (0,1]$ such that, for each $w\in \mathcal{N}(S)$, $\nu=e(w,S)/ \deg w$. Then,
$$
\tilde{h}\geq \nu.
$$ 
\end{proposition}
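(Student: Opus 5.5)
We bound $\tilde h$ from below by the single value $\tilde h(S)$ for the set $S$ provided by the hypothesis. Since $S$ is nonempty and satisfies $\vol S\leq \vol V/2$, it is admissible in Definition~\ref{def:tildeh}, so $\tilde h\geq \tilde h(S)$. It therefore suffices to show that $\tilde h(S)=\nu$ exactly, which we expect to follow from a direct computation.

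The plan is to rewrite each summand in the second form of Definition~\ref{def:tildeh}, which sums only over $\mathcal N(S)$. For every $w\in\mathcal N(S)$ the hypothesis gives $e(w,S)=\nu\deg w$. Hence
\[
\frac{e(w,S)^2}{\deg w}=\nu\, e(w,S),
\qquad\text{so}\qquad
\tilde h(S)=\frac{\nu}{\vol S}\sum_{w\in\mathcal N(S)} e(w,S).
\]

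We then use the double-counting identity $\sum_{w\in V} e(w,S)=\vol S$. Each $v\in S$ contributes exactly $\deg v$ pairs $(w,v)$ with $w\sim v$, so the left side counts $\sum_{v\in S}\deg v$. Vertices outside $\mathcal N(S)$ have $e(w,S)=0$, so restricting the sum to $\mathcal N(S)$ changes nothing. This identity was already used in the proof of Proposition~\ref{prop:N(S)}. It gives $\tilde h(S)=\nu$, and therefore $\tilde h\geq\nu$.

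There is no real obstacle here; the only point requiring care is the double-counting identity. Alternatively, one could invoke Proposition~\ref{prop:N(S)} directly. Summing $e(w,S)=\nu\deg w$ over $w\in\mathcal N(S)$ gives $\vol S=\nu\vol\mathcal N(S)$, so Proposition~\ref{prop:N(S)} yields $\tilde h\geq \vol S/\vol\mathcal N(S)=\nu$. This second route is equivalent to the first, because Sedrakyan's inequality holds with equality when the ratios $e(w,S)/\deg w$ are all equal.
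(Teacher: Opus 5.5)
Your proposal is correct and matches the paper's proof: both substitute $e(w,S)=\nu\deg w$ into the sum over $\mathcal N(S)$ to get $\tilde h(S)=\frac{\nu}{\vol S}\sum_{w\in\mathcal N(S)}e(w,S)=\nu$, and then use $\tilde h\geq\tilde h(S)$. Your alternative route through Proposition~\ref{prop:N(S)} is also valid, since $\vol S=\nu\vol\mathcal N(S)$ under the hypothesis.
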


\begin{proof}
    We have that
$$
\tilde{h}\geq \tilde{h}(S)=\frac{1}{\vol S}\cdot \sum_{w\in \mathcal{N}(S)}\frac{e(w, S)^2}{\deg w}=\frac{1}{\vol S}\cdot \nu \cdot \left(\sum_{w\in \mathcal{N}(S)}e(w, S)\right)=\nu.
$$
\end{proof}

In spectral graph theory, it is an open problem to characterize all graphs satisfying 
$$
\lambda_n=\frac{\chi}{\chi-1},
$$
and several classes of graphs for which equality holds are known, including bipartite graphs, complete graphs, and petal graphs \cite{ElphickWocjan2015,Nikiforov2007,Gabriel-colouring,SunDas2020,beers2025end}. The next corollary of Proposition~\ref{prop:equit} provides a lower bound for $\tilde h$ for such graphs.

\begin{corollary}\label{cor:coloring}
    If a graph has chromatic number $\chi$ and is such that $\lambda_n=\chi/(\chi-1)$, then
    $$
    \tilde{h}\geq \frac{1}{\chi-1}.
    $$
\end{corollary}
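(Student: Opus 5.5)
The plan is to apply Proposition~\ref{prop:equit} with $S$ a colour class of an optimal colouring and $\nu=1/(\chi-1)$. So the task is to show that the equality $\lambda_n=\chi/(\chi-1)$ forces every vertex outside a colour class to send exactly a $1/(\chi-1)$ fraction of its degree into that class. I would get this from the equality case of a Hoffman-type argument showing $\lambda_n\geq \chi/(\chi-1)$, carried out in the framework of Section~\ref{section:Background}.

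Set $T:=D^{-1}A=\id-L$ and $\mu:=1-\lambda_n$, the smallest eigenvalue of $T$. Since $T$ is self-adjoint with respect to the inner product in \eqref{eq:<>}, the operator $T-\mu\,\id$ is positive semidefinite. Fix a proper colouring $V=C_1\sqcup\dots\sqcup C_\chi$ with nonempty classes, let $y_i:=\mathbbm{1}_{C_i}$, and set $f_i:=\mathbf 1-\chi y_i$.

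The first step is a direct computation of the relevant inner products. We have $\langle (T-\mu)\mathbf 1,\mathbf 1\rangle=(1-\mu)\vol V$ and $\langle (T-\mu)\mathbf 1,y_i\rangle=(1-\mu)\vol C_i$. Since $C_i$ is independent, $\langle Ty_i,y_i\rangle=2e(C_i,C_i)=0$, so $\langle (T-\mu)y_i,y_i\rangle=-\mu\vol C_i$. Summing $\langle (T-\mu)f_i,f_i\rangle\ge 0$ over $i$ and using $\sum_i\vol C_i=\vol V$ gives
\[
0\le \chi(1-\mu)\vol V-2\chi(1-\mu)\vol V-\chi^2\mu\vol V=-\chi\vol V\bigl(1-\mu+\chi\mu\bigr).
\]
Hence $\mu\le -1/(\chi-1)$, that is, $\lambda_n\ge \chi/(\chi-1)$.

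In the equality case each term $\langle (T-\mu)f_i,f_i\rangle$ must vanish. By positive semidefiniteness this forces $(T-\mu)f_i=\mathbf 0$, so $Tf_i=\mu f_i$. Rearranging gives
\[
Ty_i=\frac{1-\mu}{\chi}\mathbf 1+\mu y_i .
\]
Evaluating at $v\notin C_i$, with $\mu=-1/(\chi-1)$, yields $e(v,C_i)/\deg v=(1-\mu)/\chi=1/(\chi-1)$. In particular $e(v,C_i)>0$ for every $v\notin C_i$, and since $C_i$ is independent, $\mathcal N(C_i)=V\setminus C_i$.

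Finally, choose $S=C_j$ of minimal volume among the colour classes. Then $\vol S\le \vol V/\chi\le \vol V/2$, because $\chi\ge 2$ (there are no isolated vertices, so there is an edge). Every $w\in\mathcal N(S)$ satisfies $e(w,S)/\deg w=1/(\chi-1)\in(0,1]$, so Proposition~\ref{prop:equit} gives $\tilde h\ge 1/(\chi-1)$.

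The main obstacle is the equality analysis: one has to justify that equality in the summed inequality forces each $f_i$ to be a $\mu$-eigenfunction. This uses that a positive semidefinite self-adjoint operator with $\langle Pf,f\rangle=0$ satisfies $Pf=0$. Everything else is bookkeeping.
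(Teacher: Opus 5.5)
Your proof is correct. It has the same skeleton as the paper's: take a colour class $S$ and apply Proposition~\ref{prop:equit} with $\nu=1/(\chi-1)$. The difference is how you obtain the key structural fact.

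The paper does not derive that fact. It cites Theorem~3.4 of \cite{Gabriel-colouring}, which says that when $\lambda_n=\chi/(\chi-1)$, every vertex $w\notin V_i$ satisfies $e(w,V_i)=\deg w/(\chi-1)$ for each class $V_i$ of a proper $\chi$-colouring. You instead reprove this from scratch inside the inner-product framework of Section~\ref{section:Background}. The Hoffman-type test functions $f_i=\mathbf 1-\chi\mathbbm{1}_{C_i}$, tested against the positive semidefinite operator $T-\mu\,\id$, give $\lambda_n\ge\chi/(\chi-1)$ in general. In the equality case each $f_i$ lies in the kernel, which yields $e(v,C_i)/\deg v=1/(\chi-1)$ for $v\notin C_i$. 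Your computations check out, including consistency at $v\in C_i$, where both sides of $Ty_i=\frac{1-\mu}{\chi}\mathbf 1+\mu y_i$ vanish.

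Your route is self-contained and shows the hypothesis is exactly the equality case of a general inequality. The paper's route is shorter.

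One small difference: you pick a minimal-volume class to guarantee $\vol S\le \vol V/2$. The paper instead takes any class, since every independent set $I$ satisfies
\[
\vol I=e(I,\overline{I})\le \vol \overline{I}.
\]
Both work. In fact, in the equality case your identity gives $(\chi-1)\vol C_i=\vol V-\vol C_i$, so every class has volume exactly $\vol V/\chi$.
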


\begin{proof}
   By Theorem 3.4 in \cite{Gabriel-colouring}, if $\lambda_n=\chi/(\chi-1)$, then for a fixed proper $\chi$-coloring of $G$ with coloring classes $V_1,\ldots,V_\chi$, for each $i\in\{1,\ldots,\chi\}$ and for every vertex $w$,

    \[
    e(w, V_i) = \begin{cases}
        \frac{\deg w}{\chi-1}, &\text{ if } w\notin V_i,\\
        0, &\text{ if }w\in V_i.
    \end{cases}
    \]

Hence, if we fix $i\in\{1,\ldots,\chi\}$, then clearly $\vol V_i\leq \vol V/2$ (since each coloring class is an independent set), and Proposition~\ref{prop:equit} for $\nu=1/(\chi-1)$ gives

$$
 \tilde{h}\geq \frac{1}{\chi-1}.
$$

\end{proof}

\begin{remark}
    For bipartite graphs, $\chi=2$ and $\lambda_n=2=\chi/(\chi-1)$. Hence, in this case, the bound in Corollary~\ref{cor:coloring} is sharp. 
    
    Similarly, for the cycle graph on three vertices, $\chi=3$, $\lambda_n=3/2=\chi/(\chi-1)$, and one can check that $\tilde{h}=1/2$ (cf.\ Example~\ref{ex:complete} below). Therefore, the bound in Corollary~\ref{cor:coloring} is sharp also in this case.
\end{remark}

The next result shows that $\tilde h(S)$ is symmetric with respect to taking the complement whenever $S$ and $\overline S$ have the same volume.

\begin{proposition}
    If a set $S\subseteq V$ is such that $\vol S = \vol V/2$, then
    \[
    \tilde h(S) = \tilde h(\overline S).
    \]
\end{proposition}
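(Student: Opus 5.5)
The plan is to expand $\tilde h(\overline S)$ using the identity $e(w,\overline S)=\deg w-e(w,S)$, valid for every $w\in V$, and then use the hypothesis $\vol \overline S=\vol V-\vol S=\vol S$ to compare the two quantities directly. No earlier result beyond Definition~\ref{def:tildeh} is needed.

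Concretely, I will write
\[
\tilde h(\overline S)=\frac{1}{\vol \overline S}\sum_{w\in V}\frac{(\deg w-e(w,S))^2}{\deg w}
\]
and expand the square. This gives three sums. The first is $\sum_{w}\deg w=\vol V$. The second is $-2\sum_w e(w,S)$. Since each edge with an endpoint in $S$ is counted from each endpoint, $\sum_{w\in V} e(w,S)=\vol S$, so this term equals $-2\vol S$. The third is $\sum_w e(w,S)^2/\deg w=\vol S\cdot\tilde h(S)$. The one point to check carefully is the identity $\sum_{w\in V}e(w,S)=\sum_{s\in S}\deg s$: it holds because the graph is simple, so every edge $\{w,s\}$ with $s\in S$ is counted exactly once from the side of $w$.

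Combining the three terms,
\[
\vol \overline S\cdot \tilde h(\overline S)=\vol V-2\vol S+\vol S\cdot\tilde h(S).
\]
With $\vol S=\vol\overline S=\vol V/2$, the first two terms cancel, and dividing by $\vol\overline S=\vol S$ gives $\tilde h(\overline S)=\tilde h(S)$. Note that $\overline S$ is nonempty because its volume is positive, so $\tilde h(\overline S)$ is defined.

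There is no real obstacle in this argument; it is bookkeeping. The general identity above also yields a formula relating $\tilde h(S)$ and $\tilde h(\overline S)$ for arbitrary $S$, which I may record as a remark.
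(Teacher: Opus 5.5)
Your proof is correct and follows essentially the same route as the paper: substituting $e(w,\overline S)=\deg w-e(w,S)$, expanding the square, using $\sum_{w\in V}e(w,S)=\vol S$, and letting the hypothesis $\vol S=\vol\overline S=\vol V/2$ cancel $\vol V-2\vol S$. Your intermediate identity $\vol\overline S\cdot\tilde h(\overline S)=\vol V-2\vol S+\vol S\cdot\tilde h(S)$ is the same computation the paper performs, stated in slightly greater generality.
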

\begin{proof}
    Let $S$ be such that $\vol S = \vol V/2$. In this case, also $\vol \overline S = \vol V/2$. Hence,
    \begin{align*}
        \tilde h(\overline S) &= \frac1{\vol \overline S}\cdot \sum_{w\in V}\frac{e(w,\overline S)^2}{\deg w} \\ &= \frac1{\vol \overline S}\cdot \sum_{w\in V}\frac{(\deg w - e(w,S))^2}{\deg w}\\
        &= \frac1{\vol \overline S}\cdot \sum_{w\in V}\biggl( \deg w - 2e(w,S)+\frac{e(w,S)^2}{\deg w}\biggr)\\
        &= \frac1{\vol S}\biggl(\vol V - 2\cdot \frac{\vol V}{2}+\sum_{w\in V}\frac{e(w,S)^2}{\deg w}\biggr)\\
        &= \frac1{\vol S} \sum_{w\in V}\frac{e(w,S)^2}{\deg w}\\
        &= \tilde h(S).
    \end{align*}
\end{proof}

We now consider several examples illustrating the behavior of $\tilde h$. We begin by computing $\tilde h$ for complete graphs.

\begin{example}\label{ex:complete}
    Consider the complete graph $K_n$ on $n\geq 2$ vertices, and let $S\subset V$ with $|S|\leq n/2$. Then,
    \begin{align*}
        \tilde h(S) &= \frac{\sum_{w\in V}e(w,S)^2}{(n-1)\cdot (n-1)|S|}\\
        &= \frac{|S|(|S|-1)^2+(n-|S|)\cdot |S|^2}{(n-1)^2|S|}\\
        &= \frac{|S|(|S|^2-2|S|+1) + (n-|S|)\cdot |S|^2}{(n-1)^2|S|}\\
        &= \frac{-2|S|^2+|S|+n|S|^2}{(n-1)^2|S|}\\
        &= \frac{|S|(n-2)+1}{(n-1)^2}.
    \end{align*}
    If $n$ is even, then $\tilde h(S)$ is maximal when $|S|=n/2$. Hence,
    \begin{align*}
        \tilde h &= \frac{1}2\cdot \frac{n(n-2)+2}{(n-1)^2}\\
        &= \frac12\cdot \frac{(n-1)^2+1}{(n-1)^2}\\
        &=\frac12\biggl(1+\frac1{(n-1)^2}\biggr).
    \end{align*}
    Furthermore, if $n$ is odd, then $\tilde h(S)$ is maximal if $|S|=(n-1)/2$. Hence, in this case,
    \begin{align*}
        \tilde h &= \frac12\cdot \frac{(n-1)(n-2)+2}{(n-1)^2}\\
        &= \frac12 \biggl( 1 - \frac{n-3}{(n-1)^2}\biggr).
    \end{align*}
    Summarizing,
    \[
    \tilde h = \begin{cases}
        \frac12\biggl(1+\frac1{(n-1)^2}\biggr), &\text{ if $n$ is even},\\
        \frac12 \biggl( 1 - \frac{n-3}{(n-1)^2}\biggr), &\text{ if $n$ is odd.}
    \end{cases}
    \]
    In particular, this illustrates that Corollary~\ref{cor:regulareven} does not hold if $n$ is odd.
\end{example}

\begin{example}\label{ex:petals}
\begin{figure}[h]
    \centering
\includegraphics[width=4cm]{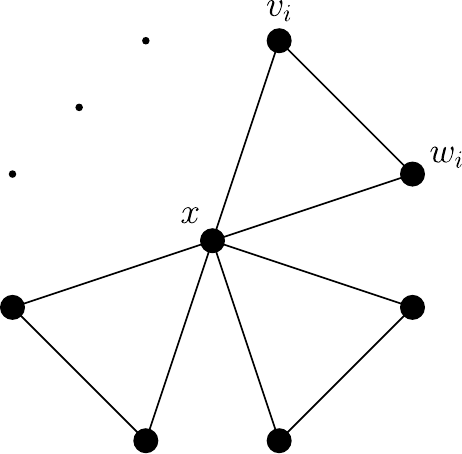}
    \caption{A petal graph.}
    \label{fig:petal}
\end{figure}
    The \emph{$m$-petal graph} on $n=2m+1$ vertices (Figure \ref{fig:petal}) is the graph with vertex set
    $$V = \{x,v_1,\ldots,v_m,w_1,\ldots,w_m\}$$
    and edge set
    \[
     E = \bigcup_{i=1}^m \biggl\{ \{x,v_i\},\{x,w_i\},\{v_i,w_i\} \biggr\}.
    \]

 Let $S$ be a nonempty subset of its vertices satisfying $\vol{S}\leq \vol{V}/2$. Moreover, let $c\in\{0,1\}$ indicate whether the center vertex belongs to $S$, and let $k \in \{0,\ldots,2m\}$ denote the number of peripheral vertices in $S$. Then, $\vol{S}=2mc+2k$. Also, as $\vol{S}\leq \vol{V}/2=3m$, either $c=0$ and $k \leq \lfloor3m/2\rfloor$, or $c=1$ and $k \leq \lfloor m/2\rfloor$.
 
    We compute
    \begin{align*}
    \tilh(S)&=\frac{\sum_{w\in V}\frac{e(w,S)^2}{\deg w}}{\vol{S}}\\
    &=\frac{\frac{k^2}{2m}+k\cdot\frac{(1+c)^2}{2}+(2m-k)\cdot\frac{c^2}{2}}{2mc+2k}\\
    &=\frac{k^2+km+2ckm+2c^2m^2}{4m(k+cm)}.
    \end{align*}

    If $c=0$, this simplifies to $\tilh(S)=(k+m)/(4m)$. If $c=1$, we get $\tilh(S)=(k+2m)/(4m)$ instead. In both cases, $\tilh(S)$ is increasing in $k$, and therefore we can verify that 
$$
\tilh=\frac{\lfloor\frac{m}{2}\rfloor+2m}{4m}.
$$
    If $m$ is even, we obtain $\tilh=5/8$. If $m$ is odd, we get $\tilh=(5m-1)/(8m)$ instead.
\end{example}

By Proposition \ref{prop:h1} we already know that for even cycles we have $\tilh=1$, as they are bipartite. We now compute $\tilh$ for odd cycles.

\begin{figure}%[h!]
\centering
\begin{subfigure}{0.32\textwidth}
\centering
\scalebox{0.5}{%
\begin{tikzpicture}
\node[style=circle,fill=black] (0) at (0,0) {};
\node[style=circle,fill=cyan] (1) at (-1,1) {};
\node[style=circle,fill=black] (2) at (0,2) {};
\node[style=circle,fill=cyan] (3) at (1,1) {};

\draw (0) -- (1);
\draw (0) -- (2);
\draw (0) -- (3);
\draw (1) -- (2);
\draw (2) -- (3);
\end{tikzpicture}
}
\subcaption{Diamond graph}
\end{subfigure}
\begin{subfigure}{0.32\textwidth}
\centering
\scalebox{0.5}{%
\begin{tikzpicture}
\node[style=circle,fill=black] (0) at (-1,{sqrt(3)}) {};
\node[style=circle,fill=cyan] (1) at (-1/2,{sqrt(3)/2}) {};
\node[style=circle,fill=black] (2) at (0,0) {};
\node[style=circle,fill=black] (3) at (1/2,{sqrt(3)/2}) {};
\node[style=circle,fill=cyan] (4) at (1,{sqrt(3)}) {};

\draw (0) -- (1);
\draw (1) -- (2);
\draw (1) -- (3);
\draw (2) -- (3);
\draw (3) -- (4);
\end{tikzpicture}
}
\subcaption{Bull graph}
\end{subfigure}
\begin{subfigure}{0.32\textwidth}
\centering
\scalebox{0.5}{%
\begin{tikzpicture}
\node[style=circle, fill=black] (0) at (0,2) {};
\node[style=circle, fill=black] (1) at ({sqrt(12)/2},-1) {};
\node[style=circle, fill=cyan] (2) at ({-sqrt(12)/2},-1) {};
\node[style=circle, fill=black] (3) at (0,-1/2) {};
\node[style=circle, fill=cyan] (4) at ({-sqrt(3)/4},1/4) {};
\node[style=circle, fill=cyan] (5) at ({sqrt(3)/4},1/4) {};

\draw (0) -- (1);
\draw (0) -- (2);
\draw (0) -- (4);
\draw (0) -- (5);
\draw (1) -- (2);
\draw (1) -- (3);
\draw (1) -- (5);
\draw (2) -- (3);
\draw (2) -- (4);
\draw (3) -- (4);
\draw (3) -- (5);
\draw (4) -- (5);
\end{tikzpicture}
}
\subcaption{Octahedral graph}
\end{subfigure}

\begin{subfigure}{0.32\textwidth}
\centering
\scalebox{0.5}{%
\begin{tikzpicture}
\node[style=circle, fill=cyan] (0) at (0,2) {};
\node[style=circle, fill=black] (1) at ({sqrt(2)},{sqrt(2)}) {};
\node[style=circle, fill=black] (2) at (2,0) {};
\node[style=circle, fill=cyan] (3) at ({sqrt(2)},{-sqrt(2)}) {};
\node[style=circle, fill=black] (4) at (0,-2) {};
\node[style=circle, fill=cyan] (5) at (-{sqrt(2)},{-sqrt(2)}) {};
\node[style=circle, fill=cyan] (6) at (-2,0) {};
\node[style=circle, fill=black] (7) at ({-sqrt(2)},{sqrt(2)}) {};

\draw (0) -- (1);
\draw (0) -- (4);
\draw (0) -- (7);
\draw (1) -- (2);
\draw (1) -- (5);
\draw (2) -- (3);
\draw (2) -- (6);
\draw (3) -- (4);
\draw (3) -- (7);
\draw (4) -- (5);
\draw (5) -- (6);
\draw (6) -- (7);
\end{tikzpicture}
}
\subcaption{Wagner graph}
\end{subfigure}
\begin{subfigure}{0.32\textwidth}
\centering
\scalebox{0.5}{%
\begin{tikzpicture}
\node[style=circle, fill=black] (0) at (0,-5/2) {};
\node[style=circle, fill=cyan] (1) at (0,-2) {};
\node[style=circle, fill=black] (2) at (0,-3/2) {};
\node[style=circle, fill=black] (3) at (0,-1) {};
\node[style=circle, fill=black] (4) at ({-sqrt(3)/2},1/2) {};
\node[style=circle, fill=black] (5) at ({sqrt(3)/2},1/2) {};
\node[style=circle, fill=black] (6) at (0,1) {};
\node[style=circle, fill=black] (7) at ({sqrt(12)/4},-1/2) {};
\node[style=circle, fill=black] (8) at ({-sqrt(12)/4},-1/2) {};

\draw (0) -- (1);
\draw (1) -- (2);
\draw (2) -- (3);
\draw (3) -- (4);
\draw (3) -- (5);
\draw (3) -- (6);
\draw (3) -- (7);
\draw (3) -- (8);
\draw (4) -- (5);
\draw (4) -- (6);
\draw (4) -- (7);
\draw (4) -- (8);
\draw (5) -- (6);
\draw (5) -- (7);
\draw (5) -- (8);
\draw (6) -- (7);
\draw (6) -- (8);
\draw (7) -- (8);
\end{tikzpicture}
}
\subcaption{$(6,3)$-lollipop graph}
\end{subfigure}
\begin{subfigure}{0.32\textwidth}
\centering
\scalebox{0.5}{%
\begin{tikzpicture}
\node[style=circle, fill=black] (0) at (0,2) {};
\node[style=circle, fill=cyan] (1) at ({(sqrt(10+2*sqrt(5))))/2},{(sqrt(5)-1)/2}) {};
\node[style=circle, fill=black] (2) at ({(sqrt(10-2*sqrt(5)))/2},{(-sqrt(5)-1)/2}) {};
\node[style=circle, fill=black] (3) at ({-(sqrt(10-2*sqrt(5)))/2},{(-sqrt(5)-1)/2}) {};
\node[style=circle, fill=cyan] (4) at ({-(sqrt(10+2*sqrt(5)))/2},{(sqrt(5)-1)/2}) {};
\node[style=circle, fill=black] (5) at (0,1) {};
\node[style=circle, fill=black] (6) at ({(sqrt(10+2*sqrt(5)))/4},{(sqrt(5)-1)/4}) {};
\node[style=circle, fill=cyan] (7) at ({sqrt(10-2*sqrt(5))/4},{(-sqrt(5)-1)/4}) {};
\node[style=circle, fill=cyan] (8) at ({(-sqrt(10-2*sqrt(5)))/4},{(-sqrt(5)-1)/4}) {};
\node[style=circle, fill=cyan] (9) at ({-(sqrt(10+2*sqrt(5))))/4},{(sqrt(5)-1)/4}) {};
\draw (0) -- (1);
\draw (0) -- (4);
\draw (0) -- (5);
\draw (1) -- (2);
\draw (1) -- (6);
\draw (2) -- (3);
\draw (2) -- (7);
\draw (3) -- (4);
\draw (3) -- (8);
\draw (4) -- (9);
\draw (5) -- (7);
\draw (5) -- (8);
\draw (6) -- (8);
\draw (6) -- (9);
\draw (7) -- (9);
\end{tikzpicture}
}
\subcaption{Petersen graph}
\end{subfigure}

\begin{subfigure}{0.48\textwidth}
\centering
\scalebox{0.5}{%
\begin{tikzpicture}
\node[style=circle, fill=cyan] (0) at (0,4) {};
\node[style=circle, fill=black] (1) at ({sqrt(12)},-2) {};
\node[style=circle, fill=black] (2) at ({-sqrt(12)},-2) {};
\node[style=circle, fill=black] (3) at (0,-1) {};
\node[style=circle, fill=cyan] (4) at ({-sqrt(3)/2},1/2) {};
\node[style=circle, fill=cyan] (5) at ({sqrt(3)/2},1/2) {};
\node[style=circle, fill=cyan] (6) at (0,1) {};
\node[style=circle, fill=black] (7) at ({sqrt(12)/4},-1/2) {};
\node[style=circle, fill=black] (8) at ({-sqrt(12)/4},-1/2) {};
\node[style=circle, fill=black] (9) at (0,-1/4) {};
\node[style=circle, fill=cyan] (10) at ({-sqrt(3)/8},1/8) {};
\node[style=circle, fill=cyan] (11) at ({sqrt(3)/8},1/8) {};

\draw (0) -- (1);
\draw (0) -- (2);
\draw (0) -- (4);
\draw (0) -- (5);
\draw (1) -- (2);
\draw (1) -- (3);
\draw (1) -- (5);
\draw (2) -- (3);
\draw (2) -- (4);
\draw (3) -- (7);
\draw (3) -- (8);
\draw (4) -- (6);
\draw (4) -- (8);
\draw (5) -- (6);
\draw (5) -- (7);
\draw (0) -- (6);
\draw (1) -- (7);
\draw (2) -- (8);
\draw (3) -- (9);
\draw (4) -- (10);
\draw (5) -- (11);
\draw (6) -- (10);
\draw (6) -- (11);
\draw (7) -- (9);
\draw (7) -- (11);
\draw (8) -- (9);
\draw (8) -- (10);
\draw (9) -- (10);
\draw (10) -- (11);
\draw (9) -- (11);
\end{tikzpicture}
}
\subcaption{Icosahedral graph}
\end{subfigure}
\begin{subfigure}{0.48\textwidth}
\centering
\scalebox{0.5}{%
\begin{tikzpicture}
\node[style=circle, fill=black] (0) at (0,4) {};
\node[style=circle, fill=black] (1) at ({(sqrt(10+2*sqrt(5))))},{(sqrt(5)-1)}) {};
\node[style=circle, fill=black] (2) at ({(sqrt(10-2*sqrt(5)))},{(-sqrt(5)-1)}) {};
\node[style=circle, fill=black] (3) at ({-(sqrt(10-2*sqrt(5)))},{(-sqrt(5)-1)}) {};
\node[style=circle, fill=black] (4) at ({-(sqrt(10+2*sqrt(5)))},{(sqrt(5)-1)}) {};
\node[style=circle, fill=black] (5) at (0,2) {};
\node[style=circle, fill=black] (6) at ({(sqrt(10+2*sqrt(5)))/2},{(sqrt(5)-1)/2}) {};
\node[style=circle, fill=black] (7) at ({sqrt(10-2*sqrt(5))/2},{(-sqrt(5)-1)/2}) {};
\node[style=circle, fill=black] (8) at ({(-sqrt(10-2*sqrt(5)))/2},{(-sqrt(5)-1)/2}) {};
\node[style=circle, fill=black] (9) at ({-(sqrt(10+2*sqrt(5))))/2},{(sqrt(5)-1)/2}) {};
\node[style=circle, fill=cyan] (10) at (0,-1) {};
\node[style=circle, fill=cyan] (11) at ({-(sqrt(10+2*sqrt(5)))/4},{-(sqrt(5)-1)/4}) {};
\node[style=circle, fill=cyan] (12) at ({-sqrt(10-2*sqrt(5))/4},{-(-sqrt(5)-1)/4}) {};
\node[style=circle, fill=cyan] (13) at ({(sqrt(10-2*sqrt(5)))/4},{-(-sqrt(5)-1)/4}) {};
\node[style=circle, fill=cyan] (14) at ({(sqrt(10+2*sqrt(5))))/4},{-(sqrt(5)-1)/4}) {};
\node[style=circle, fill=cyan] (15) at (0,-1/2) {};
\node[style=circle, fill=cyan] (16) at ({-(sqrt(10+2*sqrt(5)))/8},{-(sqrt(5)-1)/8}) {};
\node[style=circle, fill=cyan] (17) at ({-sqrt(10-2*sqrt(5))/8},{-(-sqrt(5)-1)/8}) {};
\node[style=circle, fill=cyan] (18) at ({(sqrt(10-2*sqrt(5)))/8},{-(-sqrt(5)-1)/8}) {};
\node[style=circle, fill=cyan] (19) at ({(sqrt(10+2*sqrt(5))))/8},{-(sqrt(5)-1)/8}) {};

\draw (0) -- (1);
\draw (0) -- (4);
\draw (0) -- (5);
\draw (1) -- (2);
\draw (1) -- (6);
\draw (2) -- (3);
\draw (2) -- (7);
\draw (3) -- (4);
\draw (3) -- (8);
\draw (4) -- (9);
\draw (5) -- (12);
\draw (5) -- (13);
\draw (6) -- (13);
\draw (6) -- (14);
\draw (7) -- (10);
\draw (7) -- (14);
\draw (8) -- (10);
\draw (8) -- (11);
\draw (9) -- (11);
\draw (9) -- (12);
\draw (10) -- (15);
\draw (11) -- (16);
\draw (12) -- (17);
\draw (13) -- (18);
\draw (14) -- (19);
\draw (15) -- (16);
\draw (15) -- (19);
\draw (16) -- (17);
\draw (17) -- (18);
\draw (18) -- (19);
\end{tikzpicture}
}
\subcaption{Dodecahedral graph}
\end{subfigure}

\caption{The graphs from Table~\ref{table}. Cyan vertices form sets $S$ for which $\tilh(S)=\tilh$.}
\label{fig:graphs}
\end{figure}

\begin{example}\label{ex:oddcycle}
We consider the odd cycle $C_{2m+1}$ ($m \in \mathbb{N}$), and we let $S$ be a subset of its vertices of size $k>0$ and volume $\vol{S}\leq \vol{V}/2$, that is, satisfying $k \leq m$. Let $l$ be the number of vertices of $C_{2m+1}$ that have two neighbors in $S$, and note that $l\leq k-1$. Then, $2k-2l$ vertices of $C_{2m+1}$ have one neighbor in $S$, and the rest has no neighbors in $S$.

We compute
\begin{align*}
\tilh(S)&=\frac{\sum_{w\in V}\frac{e(w,S)^2}{\deg w}}{\vol{S}}\\
&=\frac{l\cdot\frac{2^2}{2}+(2k-2l)\cdot\frac{1}{2}}{2k}\\
&=\frac{k+l}{2k}.
\end{align*}

Note that $(k+l)/(2k)$ is an increasing function of $l$, therefore $\tilh(S)\leq(2k-1)/2k$. As $(2k-1)/2k$ is an increasing function of $k$, we get $\tilh(S)\leq(2k-1)/2k$.

Hence, $\tilh \leq (2k-1)/2k$. At the same time, by choosing $T$ to be an independent set of size $m$, we get $\tilh(T)=(2k-1)/2k$. Therefore, for odd cycles, $$\tilh=\frac{2m-1}{2m}.$$
\end{example}

We now compute $\tilh$ for a selection of small connected non-bipartite
graphs. The values in Table \ref{table} below
were obtained computationally.\newline 

\begin{table}[h!]
\begin{center}\renewcommand{\arraystretch}{1.3}
\begin{tabular}{|c c c|}
\hline
Graph & $n$ & $\tilh$ \\
\hline\hline
Diamond graph & $4$ & $2/3$ \\ 
\hline
Bull graph & $5$ & $17/24$ \\
\hline
Octahedral graph & $6$ & $7/12$ \\
\hline
Wagner graph & $8$ & $7/9$ \\
\hline
$(6,3)$-lollipop graph & $9$ & $3/4$ \\
\hline
Petersen graph & $10$ & $31/45$ \\
\hline
Icosahedral graph & $12$ & $3/5$ \\
\hline
Dodecahedral graph & $20$ & $7/9$ \\
\hline
\end{tabular}\caption{Values of $\tilh$ for several graphs on $n$ vertices depicted in Figure~\ref{fig:graphs}.}\label{table}
\end{center}
\end{table}

Note that for all graphs in Table \ref{table} and for most graphs in Examples \ref{ex:complete}, \ref{ex:petals}, and \ref{ex:oddcycle}, we have $\tilde h > 1/2$. This observation motivates the following question.
\begin{question}
    Is there a constant $c>0$ such that $\tilde{h}>c$ for all graphs without isolated vertices? If so, what is the largest such constant?
\end{question}

If such a constant $c$ exists, it must be strictly smaller than $1/2$, because complete graphs $K_n$ on an odd number of vertices satisfy $\tilde h < 1/2$, as shown in Example \ref{ex:complete}. Nevertheless, as $n\to\infty$, the value of $\tilde h$ for $K_n$ converges to $1/2$. This observation motivates the following question.
\begin{question}
   Is there a function $f(n)$ such that $\tilde h \geq f(n)$ for all graphs on $n$ vertices, satisfying $\lim_{n\to\infty} f(n) = 1/2$?
\end{question}

\section{Lower bounds for \texorpdfstring{$\tau^2$}{t2}}\label{section:lower}

This section is devoted to proving two sharp lower bounds for $\tau^2$. First, in Theorem~\ref{thm:lower} we establish a general lower bound that does not involve $\tilde h$. As a consequence, we then derive the sharp inequality in Corollary~\ref{cor:lower} which is expressed in terms of $\tilde h$. Again, all graphs are assumed to be simple and to have no isolated vertices.

\begin{theorem}\label{thm:lower}
    For every graph,
    \begin{equation*}
     \tau^2\geq   \max_{\emptyset \neq S\subsetneq V} \left(\frac{\vol V}{\vol \overline{S}}\cdot\left(\frac{1}{\vol S}\cdot\sum_{w\in V}\frac{e(w, S)^2}{\deg w}\right)-\frac{\vol S}{\vol \overline{S}}\right).
    \end{equation*}
    Moreover, the inequality is sharp.
\end{theorem}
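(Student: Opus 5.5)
The plan is to use the variational characterization \eqref{eq:tau2} with a test function adapted to the cut $(S,\overline S)$. For a fixed $\emptyset\neq S\subsetneq V$, take
\[
f:=\mathbbm{1}_S-\frac{\vol S}{\vol V}\mathbf 1 .
\]
Then $\langle f,\mathbf 1\rangle=\vol S-\vol S=0$, so $f$ is admissible in \eqref{eq:tau2}. It is nonzero because $S\neq V$. Hence $\tau^2\geq \RQ(f)$, and it remains to compute $\RQ(f)$ and check that it equals the bracketed expression in the statement. Maximizing over $S$ then gives the inequality.

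For the computation, write $c:=\vol S/\vol V$ and use self-adjointness of $M$ together with $M\mathbf 1=\mathbf 1$. Expanding gives
\[
\langle Mf,f\rangle=\langle M\mathbbm{1}_S,\mathbbm{1}_S\rangle-2c\langle \mathbbm{1}_S,\mathbf 1\rangle+c^2\langle\mathbf 1,\mathbf 1\rangle .
\]
By the computation in the proof of Proposition~\ref{prop:h1}, $\langle M\mathbbm{1}_S,\mathbbm{1}_S\rangle=\sum_{w}e(w,S)^2/\deg w=\vol S\cdot\tilde h(S)$. Substituting $\langle \mathbbm{1}_S,\mathbf 1\rangle=\vol S$ and $\langle\mathbf 1,\mathbf 1\rangle=\vol V$ yields
\[
\langle Mf,f\rangle=\vol S\cdot\tilde h(S)-\frac{(\vol S)^2}{\vol V}.
\]
Similarly,
\[
\langle f,f\rangle=\vol S-\frac{(\vol S)^2}{\vol V}=\frac{\vol S\,\vol\overline S}{\vol V}.
\]
Dividing the two, we get
\[
\RQ(f)=\frac{\vol V}{\vol\overline S}\,\tilde h(S)-\frac{\vol S}{\vol\overline S},
\]
which is exactly the expression inside the maximum. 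This step is routine bookkeeping; the one point to be careful about is that $M$ is self-adjoint with respect to $\langle\cdot,\cdot\rangle$, which legitimizes $\langle M\mathbbm{1}_S,\mathbf 1\rangle=\langle\mathbbm{1}_S,M\mathbf 1\rangle=\vol S$.

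For sharpness, it suffices to exhibit one graph with equality, and the most likely obstacle is choosing it. Bipartite graphs should work. Take $S$ to be one side of the bipartition. Every edge meeting $S$ has its other endpoint in $\overline S$, so $e(w,S)=\deg w$ for $w\in\overline S$ and $e(w,S)=0$ for $w\in S$. Hence $\sum_w e(w,S)^2/\deg w=\vol\overline S$, and for bipartite graphs $\vol S=\vol\overline S=\vol V/2$. The bracket then equals $\frac{\vol V}{\vol\overline S}\cdot\frac{\vol\overline S}{\vol S}-1=2-1=1$. Since $\tau=1$ for bipartite graphs by Remark~\ref{rmk:bip}, equality holds. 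Alternatively, equality occurs whenever an eigenfunction for $\tau^2$ takes only two values; the bipartite example is the cleanest instance of this.
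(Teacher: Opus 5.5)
Your proof is correct and is essentially the paper's. Your test function $\mathbbm{1}_S-\frac{\vol S}{\vol V}\mathbf 1$ is exactly $\frac{1}{1+\alpha}$ times the paper's function ($1$ on $S$, $-\alpha$ on $\overline S$, with $\alpha=\vol S/\vol\overline S$), and your expansion via self-adjointness and $M\mathbf 1=\mathbf 1$ is a tidier version of the paper's direct manipulation of the sums. For sharpness, the paper's proof uses $K_n$ with $S$ a single vertex, but your bipartite example is the one the paper records in Remark~\ref{rmk:bipartite}, so it works equally well.
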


The proof of Theorem~\ref{thm:lower} is based on evaluating the Rayleigh Quotient of a function that is constant on $S$ and on its complement. The beginning of the proof is analogous to the classical proof \cite{chung} of the upper bound in the Cheeger inequalities \eqref{eq:Cheeger}.

\begin{proof}[Proof of Theorem~\ref{thm:lower}]
    Fix $\emptyset \neq S\subsetneq V$, let
$\alpha:=\vol S/\vol \overline{S}$, and let $f:V\rightarrow\mathbb{R}$ be such that $f:=1$ on $S$ and $f:=-\alpha$ on $\overline{S}$. Then, $\sum_{v\in V}\deg v\cdot f(v)=0$, implying that 
\begin{equation}\label{eq:lower1}
    \tau^2\geq \textrm{RQ}(f)
    =\frac{\sum\limits_{w\in V}\frac{1}{\deg w}\left(\sum\limits_{v\in \mathcal{N}(w)}f(v)\right)^2}{\sum\limits_{w\in V} \deg w\cdot f(w)^2}.
\end{equation} Since $\alpha\vol \overline{S}=\vol S$, the denominator in \eqref{eq:lower1} can be rewritten as
\begin{equation*}
    \sum\limits_{w\in V} \deg w\cdot f(w)^2=\vol S+\alpha^2 \vol \overline{S}=(1+\alpha)\vol S.
\end{equation*} 

Moreover, the numerator satisfies
\begin{align*}
    \sum\limits_{w\in V}\frac{1}{\deg w}\left(\sum\limits_{v\in \mathcal{N}(w)}f(v)\right)^2&= \sum\limits_{w\in V}\frac{1}{\deg w}\left(\sum\limits_{v\in \mathcal{N}(w)\cap S}1-\sum\limits_{v\in \mathcal{N}(w)\cap \overline{S}}\alpha\right)^2\\
    &=\sum\limits_{w\in V}\frac{1}{\deg w}\biggl(e(w, S)-\alpha e(w, \overline{S})\biggr)^2.
\end{align*}

Note also that
\begin{equation*}
    \vol S=\sum_{v\in S}\deg v=\sum_{w\in V} e(w,S)
\end{equation*}
and, for all $w\in V$,
$$
e(w,\overline{S})=\deg w-e(w,S).
$$

Therefore,

\begin{align*}
  \textrm{RQ}(f)&= \frac{\sum\limits_{w\in V}\frac{1}{\deg w}\biggl(e(w, S)-\alpha e(w, \overline{S})\biggr)^2}{(1+\alpha)\vol S}\\
    &=\frac{\sum\limits_{w\in V}\frac{1}{\deg w}\biggl(e(w, S)-\alpha\left(\deg w-e(w,S)\right)\biggr)^2}{(1+\alpha)\vol S}\\
     &=\frac{\sum\limits_{w\in V}\frac{1}{\deg w}\biggl((1+\alpha)e(w, S)-\alpha\deg w\biggr)^2}{(1+\alpha)\vol S}\\
     &=\frac{\sum\limits_{w\in V}\frac{1}{\deg w}\biggl((1+\alpha)^2e(w, S)^2+\alpha^2(\deg w)^2-2\alpha\deg w(1+\alpha)e(w, S)\biggr)}{(1+\alpha)\vol S}\\
     &=\frac{\sum\limits_{w\in V}\biggl(\frac{(1+\alpha)^2e(w, S)^2}{\deg w}+\alpha^2\deg w-2\alpha(1+\alpha)e(w, S)\biggr)}{(1+\alpha)\vol S}\\
     &=\sum_{w\in V}\frac{(1+\alpha)e(w, S)^2}{\deg w\cdot \vol S}+\sum_{w\in V}\frac{\alpha^2\deg w}{(1+\alpha)\vol S}-\sum_{w\in V}\frac{2\alpha e(w, S)}{\vol S}\\
     &=\frac{(1+\alpha)}{\vol S}\cdot\left(\sum_{w\in V}\frac{e(w, S)^2}{\deg w}\right)+\frac{\alpha^2\vol V}{(1+\alpha)\vol S}-2\alpha.
\end{align*}
Now, observe that $$1+\alpha=1+\frac{\vol S}{\vol \overline{S}}=\frac{\vol \overline{S}+\vol S}{\vol \overline{S}}=\frac{\vol V}{\vol \overline{S}}.$$ Hence,

\begin{equation*}
    \frac{\alpha^2\vol V}{(1+\alpha)\vol S}=\frac{\vol \overline{S}}{\vol V}\cdot \frac{(\vol S)^2}{(\vol \overline{S})^2}\cdot \frac{\vol V}{\vol S}=\frac{\vol S}{\vol \overline{S}}=\alpha,
\end{equation*}and similarly
\begin{equation*}
    \frac{(1+\alpha)}{\vol S}=\frac{\vol V}{\vol S\cdot \vol \overline{S}}.
\end{equation*}

This implies that 
\begin{equation}\label{eq:lower}
    \tau^2\geq \textrm{RQ}(f)=\frac{\vol V}{\vol S\cdot \vol \overline{S}}\cdot\left(\sum_{w\in V}\frac{e(w, S)^2}{\deg w}\right)-\alpha.
\end{equation} This proves the first claim. To show that the inequality is sharp, assume that $G$ is the complete graph. In this case, $\lambda_2=\lambda_n=n/(n-1)$, implying that
$$\tau=\frac{1}{n-1}.$$
If we let $S=\{x\}$, then $\vol S=n-1$ and $\vol \overline{S}=(n-1)^2$. Hence, \eqref{eq:lower} becomes 
\begin{equation*}
    \tau^2\geq \frac{n(n-1)}{(n-1)^3}\cdot\left(\sum_{w\in V\setminus \{x\}}\frac{1}{n-1}\right)-\frac{n-1}{(n-1)^2}=\frac{n}{(n-1)^2}-\frac{n-1}{(n-1)^2}=\frac{1}{(n-1)^2}.
\end{equation*}Equivalently, $\tau\geq 1/(n-1)$. Since we have shown that $\tau= 1/(n-1)$ in this case, the inequality is sharp.
\end{proof}

\begin{remark}\label{rmk:bipartite}
If $G$ is bipartite, then $\tau=1$ by Remark~\ref{rmk:bip}. In this case, the sets $S$ and $\overline{S}$ that give the bipartition satisfy
    \begin{equation*}
        \sum_{w\in V}\frac{e(w, S)^2}{\deg w}=\sum_{w\in \overline{S}}\frac{(\deg w)^2}{\deg w}=\vol \overline{S}.
    \end{equation*}
    Moreover, in this case $\vol S=\vol \overline{S}=\vol V/2$, hence \eqref{eq:lower} becomes
    $$
    \tau^2\geq \frac{\vol V}{\vol S\cdot \vol \overline{S}}\cdot \vol \overline{S}-\frac{\vol S}{\vol \overline{S}}=1,
    $$ or equivalently, $\tau\geq 1$. Hence, the inequality from Theorem~\ref{thm:lower} is sharp also in this case.
\end{remark}

\begin{remark} 
    If $G$ is disconnected, then $\tau=1$, since $\lambda_2 = 0$. In this case, there exists a set $S\subset V$ of volume $\vol S \leq \vol V/2$ that forms the vertex set of a connected component. In this case, Equation~\eqref{eq:lower1} becomes
    \begin{align*}
        \tau^2 &\geq \frac{\vol V}{\vol S\cdot \vol \overline S}\cdot \biggl(\sum_{w\in S}\frac{(\deg w)^2}{\deg w}\biggr) - \frac{\vol S}{\vol \overline S}\\
        &= \frac{\vol S + \vol \overline S}{\vol \overline S}-\frac{\vol S}{\vol \overline S}\\
        &= 1
    \end{align*}
    or equivalently, $\tau\geq 1$. Hence, the inequality from Theorem~\ref{thm:lower} is also sharp in this case.
\end{remark}

\begin{remark}
    The expression from the statement of Theorem~\ref{thm:lower} is symmetric in $S$ and $\overline S$. To see this, consider $f$ as defined in the proof, and define $f'$ by reversing the roles of $S$ and $\overline S$. Then,
    \[
    f' = -\frac{f}{\alpha}.
    \]
    Since $f$ and $f'$ are multiples of one another, they have the same Rayleigh quotient. Hence,
    \begin{align*}
         \RQ(f)&=\frac{\vol V}{\vol \overline{S}}\cdot\left(\frac{1}{\vol S}\cdot\sum_{w\in V}\frac{e(w, S)^2}{\deg w}\right)-\frac{\vol S}{\vol \overline{S}}\\
    &= \RQ(f') \\
    &= \left(\frac{\vol V}{\vol{S}}\cdot\left(\frac{1}{\vol \overline S}\cdot\sum_{w\in V}\frac{e(w,\overline{S})^2}{\deg w}\right)-\frac{\vol \overline S}{\vol {S}}\right).
    \end{align*}

\end{remark}

We are now ready to prove the following sharp lower bound for $\tau^2$ in terms of $\tilde h$, which follows from Theorem~\ref{thm:lower}.

\begin{corollary}\label{cor:lower}
    For every graph,
    \begin{equation*}
     \tau^2\geq2\tilde{h}-1.
    \end{equation*}
    Moreover, the inequality is sharp.
\end{corollary}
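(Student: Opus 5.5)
The plan is to specialize Theorem~\ref{thm:lower} to a set $S$ that attains $\tilde h$. Concretely, fix $\emptyset\neq S\subseteq V$ with $\vol S\leq \vol V/2$ and $\tilde h(S)=\tilde h$.

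If $S=V$, then $\vol V\leq \vol V/2$ is impossible. Hence $S\subsetneq V$ and Theorem~\ref{thm:lower} applies. Writing $a:=\vol S$, $b:=\vol\overline S$ and $\vol V=a+b$, it gives
\[
\tau^2\geq \frac{a+b}{b}\,\tilde h(S)-\frac{a}{b}=\tilde h+\frac{a}{b}\bigl(\tilde h-1\bigr).
\]

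Next, bound the correction term. We have $a\leq b$, since $\vol S\leq \vol V/2$, and $\tilde h-1\leq 0$ by Proposition~\ref{prop:h1}. Therefore
\[
\frac{a}{b}\bigl(\tilde h-1\bigr)\geq \tilde h-1,
\]
and so $\tau^2\geq 2\tilde h-1$. This is the whole inequality. The only thing to be careful about is the sign of $\tilde h-1$, which is exactly what makes the worst case $a=b$.

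For sharpness, I would use the bipartite case. By Remark~\ref{rmk:bip} and Example~\ref{ex:bip}, $\tau=\tilde h=1$, so both sides equal $1$. This gives equality, and the same argument covers disconnected graphs by Proposition~\ref{prop:h1}.

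A less trivial equality case would be $K_n$ with $n$ even, but it does not work. There $\tilde h=\tfrac12(1+1/(n-1)^2)$, so $2\tilde h-1=1/(n-1)^2=\tau^2$ by the computation in the proof of Theorem~\ref{thm:lower}, which seems to give equality. However, $\tilde h(S)$ was computed as $(|S|(n-2)+1)/(n-1)^2$. At $|S|=n/2$ this equals $(n(n-2)+2)/(2(n-1)^2)=((n-1)^2+1)/(2(n-1)^2)$, and then $2\tilde h-1=1/(n-1)^2$. So equality does hold, and this family gives a nontrivial sharpness example as well. The main obstacle is minimal; the only point requiring care is confirming the equal-volume choice for $K_n$, which the example already supplies.
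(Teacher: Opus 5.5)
Your proof is correct and is essentially the paper's argument: apply Theorem~\ref{thm:lower} to a set with $\vol S\le \vol V/2$, and rewrite the bound as $\tilde h(S)+\frac{a}{b}\bigl(\tilde h(S)-1\bigr)$. Then use $a/b\le 1$ and $\tilde h(S)\le 1$, and get sharpness from bipartite graphs. Choosing a maximizer directly and checking $S\neq V$, as you do, is only a cosmetic difference.

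Your extra example is also right: for $K_n$ with $n$ even, $2\tilde h-1=1/(n-1)^2=\tau^2$. This is a genuine nontrivial equality case, so you should delete the self-contradictory phrase ``but it does not work''.
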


\begin{proof}

Fix a nonempty set $S\subseteq V$ such that $\vol S\leq \vol V/2$, or equivalently, $\vol S\leq \vol \overline{S}$. Then, by Theorem~\ref{thm:lower},

\begin{equation*}
    \tau^2\geq \frac{\vol V}{\vol \overline{S}}\cdot \tilde{h}(S)-\frac{\vol S}{\vol \overline{S}}.
\end{equation*} Now, let $\alpha:=\vol S/\vol \overline{S}$. Then, $\alpha\leq 1$ and 
$$
\frac{\vol V}{\vol \overline{S}}=\frac{\vol \overline{S}+\vol S}{\vol \overline{S}}=1+\alpha.
$$ Hence,
$$
\tau^2 \geq (1+\alpha)\tilde{h}(S)-\alpha=\tilde{h}(S)+\alpha(\tilde{h}(S)-1).
$$

Now,  the inequalities in the proof of Proposition~\ref{prop:h1} show that $\tilde{h}(S)\leq 1$ for any $S$. Therefore,  $\tilde{h}(S)-1\leq 0$, and together with the fact that $\alpha \leq 1$, this implies that

$$
\tau^2 \geq \tilde{h}(S)+\alpha (\tilde{h}(S)-1)\geq \tilde{h}(S)+\tilde{h}(S)-1=2\tilde{h}(S)-1.
$$

Maximizing over all nonempty sets $S\subseteq V$ with $\vol S\leq \vol V/2$ gives
$$
\tau^2 \geq 2\tilde{h}-1.
$$
        By Remark~\ref{rmk:bip}, equality holds for bipartite graphs, showing that the inequality is sharp.
\end{proof}

\begin{remark}\label{rmk:petals}
For the $m$-petal graph on $n=2m+1$ vertices, with an even number $m$ of petals, we have $\tilde{h}=5/8$ by Example \ref{ex:petals}, while $\tau^2=1/4$ by \cite{JMZ23}. Hence, the inequality in Corollary~\ref{cor:lower} is sharp also in this case.
\end{remark}

\begin{comment}
   If $m$ is even, let
    \[
    S \coloneqq \{x\} \cup \biggl\{v_i\colon 1\leq i\leq \frac m2\biggr\}.
    \]
       Then,
    \begin{align*}
        \tilde h(S) &= \frac1{\vol S}\cdot \sum_{w\in V} \frac{e(w,S)^2}{\deg w}\\
        &= \frac1{\deg x+m/2\cdot \deg v_1}\biggl( \frac{e(x,S)^2}{\deg x}+ \sum_{i=1}^m \frac{e(v_i,S)^2}{\deg v_i} + \sum_{i=1}^{m/2}\frac{e(w_i,S)^2}{\deg w_i} + \sum_{i=m/2+1}^{m}\frac{e(w_i,S)^2}{\deg w_i}\biggr)\\
        &= \frac1{2m + m/2 \cdot 2}\biggl( \frac{m^2/4}{2m} + m\cdot \frac1{2} + \frac m2 \cdot \frac{4}{2} + \frac m2 \cdot \frac12\biggr)\\
        &= \frac1{3}\biggl( \frac18+\frac12+1+\frac14\biggr)\\
        &= \frac58.
    \end{align*}
    Consequently, $\tilde h \geq 5/8$. Moreover, by Corollary \ref{cor:lower} and since $\tau^2=1/4$ in this case \cite{JMZ23}, we also have the opposite inequality: $\tilde h \leq 5/8$. Therefore, for petal graphs with an even number of petals, $\tilde{h}=5/8$, and the bound from Corollary~\ref{cor:lower} is an equality.
\end{comment}

\begin{remark}
    If $G$ is a connected non-bipartite $d$-regular Ramanujan graph, then by \eqref{eq:Ramanujan},
\begin{equation*}
    \tau^2\leq \frac{4(d-1)}{d^2}.
\end{equation*} Together with Corollary~\ref{cor:lower} this implies that, in this case,
$$
\tilde{h}\leq \frac{1}{2}+\frac{2(d-1)}{d^2}.
$$
\end{remark}

\section{Upper bounds for \texorpdfstring{$\tau^2$}{t2}}\label{section:upper}

Also in this section, all graphs are assumed to be simple and to have no isolated vertices. The main upper bound for $\tau^2$ is given by the following theorem, whose proof is presented in Section~\ref{section:proof}.

\begin{theorem}\label{thm:upper}
    For every graph $G$,
    \begin{equation}\label{eq:upperbound}
     \tau^2\leq \sqrt{1-(1-\tilh)^2}.
    \end{equation}
    Moreover, the inequality is an equality if and only if $G$ is bipartite or disconnected.
\end{theorem}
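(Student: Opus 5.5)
The plan is to reduce Theorem~\ref{thm:upper} to the classical lower Cheeger inequality in \eqref{eq:Cheeger}, applied to an auxiliary weighted graph that encodes the two-step random walk. Define the \emph{two-step graph} $G^{(2)}$ on $V$ with symmetric edge weights
\[
W_{vu}:=\deg v\cdot M_{vu}=\sum_{w\in\mathcal N(v)\cap\mathcal N(u)}\frac{1}{\deg w}, \qquad v,u\in V,
\]
where loops $W_{vv}=\sum_{w\in\mathcal N(v)}1/\deg w$ are allowed. The weighted degree of $v$ in $G^{(2)}$ is $\sum_u W_{vu}=\sum_{w\sim v}\frac{1}{\deg w}\deg w=\deg v$. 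Hence $G^{(2)}$ has the same degrees, the same volumes and the same inner product \eqref{eq:<>} as $G$. Its random walk matrix is exactly $M=(D^{-1}A)^2$, so its normalized Laplacian is $L^{(2)}=\id-M$. Consequently $\lambda_2(L^{(2)})=1-\tau^2$, since $\tau^2$ is the second largest eigenvalue of $M$ by \eqref{eq:tau2}.

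Next, I will compute the Cheeger constant $h^{(2)}$ of $G^{(2)}$. For $S\subseteq V$, the total weight of pairs inside $S$ (counting loops once and ordered pairs as in $\vol$) is
\[
\sum_{v,u\in S}W_{vu}=\sum_{w\in V}\frac{e(w,S)^2}{\deg w}.
\]
Therefore
\[
\frac{e^{(2)}(S,\overline S)}{\vol S}=\frac{\vol S-\sum_{v,u\in S}W_{vu}}{\vol S}=1-\tilde h(S).
\]
Minimizing over $\vol S\le\vol V/2$ gives $h^{(2)}=1-\tilde h$. The lower Cheeger inequality $\lambda_2\ge 1-\sqrt{1-h^2}$ also holds for weighted graphs with loops. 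Applying it to $G^{(2)}$ gives $1-\tau^2\ge 1-\sqrt{1-(1-\tilde h)^2}$, which is \eqref{eq:upperbound}. I will check this carefully by rerunning the standard proof (as in \cite{chung,book_Jost_Mulas_Zhang}) in the weighted setting. Loops add to degrees and volumes but never to boundary terms, and the co-area/Cauchy--Schwarz steps only use nonnegativity of the weights. If one prefers to avoid the reduction, the same argument can be written directly with $\RQ$ and the expression $\sum_w \frac{1}{\deg w}(\sum_{v\sim w} f(v))^2$.

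For the equality case, sufficiency is immediate. If $G$ is bipartite or disconnected, then $\tau=1$ (Remark~\ref{rmk:bip}, or $\lambda_2=0$) and $\tilde h=1$ (Proposition~\ref{prop:h1}), so both sides equal $1$. For necessity, assume $G$ is connected and non-bipartite. Then $\tilde h<1$ by Proposition~\ref{prop:h1}, so $h^{(2)}>0$, and I must show the weighted Cheeger bound is strict. This is the main obstacle. I will trace the equality conditions in the proof. Take a positive eigenfunction piece $f\ge0$ supported on a set of volume $\le\vol V/2$. The key steps are:
\begin{enumerate}
\item[(i)] Cauchy--Schwarz on $\sum W_{vu}|f(v)^2-f(u)^2|$;
\item[(ii)] the co-area bound $\sum W_{vu}|f(v)^2-f(u)^2|\ge h^{(2)}\sum \deg v f(v)^2$;
\item[(iii)] the identity $\sum W_{vu}(f(v)+f(u))^2=2\sum\deg v f(v)^2\cdot 2-\sum W_{vu}(f(v)-f(u))^2$.
\end{enumerate}
Equality in (ii) forces every level set of $f$ to be a Cheeger-optimal set. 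Equality in (i) forces $|f(v)-f(u)|$ to be proportional to $f(v)+f(u)$ on every edge of $G^{(2)}$ with positive weight. I expect the latter to be impossible for a non-constant $f$ vanishing somewhere when $G^{(2)}$ contains loops: every vertex has $W_{vv}>0$, and $v=u$ gives $0=c\cdot 2f(v)$. This forces $c=0$ or $f\equiv0$ on the support, and either case contradicts $h^{(2)}>0$. So strictness should follow from the presence of loops in $G^{(2)}$, which is the feature I will exploit.
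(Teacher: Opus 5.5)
Your proposal is correct. The engine underneath is the same as the paper's, but you organize it as a reduction, and that is cleaner.

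The paper never introduces $G^{(2)}$. After reducing (Claim~\ref{cl:f}) to a nonnegative $f$ with $1\in\range(f)$ and $\vol\supp f\le\vol V/2$, it applies Cauchy--Schwarz to $\sum_w\sum_{(u,v)\in\mathcal N(w)^2}\min\{f(u),f(v)\}\max\{f(u),f(v)\}/\deg w$. It then uses the random level set $S_t$ (with $t^2\sim U[0,1]$) and Lemma~\ref{lem:xyexey} in place of a co-area formula.

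Your dictionary $W_{vu}=\deg v\cdot M_{vu}$, $\lambda_2(G^{(2)})=1-\tau^2$, $h^{(2)}=1-\tilh$ shows that this is exactly the weighted lower Cheeger argument. Put $A=\sum W\min^2$, $B=\sum W\max^2$, $C=\sum W\min\max$, summed over ordered pairs with the diagonal included. Then the paper's inequality $C^2\le AB$ is algebraically identical to your $(B-A)^2\le(A+B-2C)(A+B+2C)$. The paper's choice of $t_0$ via Lemma~\ref{lem:xyexey} does the job of your co-area step (ii).

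The paper's equality analysis also hinges on the diagonal terms $u=v$, that is, on your loops. It then derives bipartiteness by a parity argument. Your ending is shorter: $f$ is constant on components of $G^{(2)}$, which is connected iff $\tilh<1$, and this contradicts $\vol\supp f\le\vol V/2$.

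The reduction also explains the lower bounds. Theorem~\ref{thm:lower} and Corollary~\ref{cor:lower} are just the test-function bounds $1-\tau^2=\lambda_2(G^{(2)})\le(1+\alpha)\bigl(1-\tilh(S)\bigr)$ and, for $\alpha=\vol S/\vol\overline S\le 1$, $\lambda_2(G^{(2)})\le 2\bigl(1-\tilh(S)\bigr)$.

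What the paper's direct route buys is self-containedness: it never has to fix conventions for loops. That is exactly where you need care when rerunning the proof:
\begin{itemize}
\item Degrees must count loops once, so that $D^{-1}W=M$.
\item Steps (ii) and (iii) must use the same convention. Over ordered pairs with the diagonal, (iii) holds as you wrote it, but (ii) must read $\sum_{(v,u)}W_{vu}|f(v)^2-f(u)^2|\ge 2h^{(2)}\sum_v\deg v\,f(v)^2$ to give the sharp constant.
\item Over unordered pairs with loops counted once, (iii) would instead acquire an extra term $+2\sum_v W_{vv}f(v)^2$.
\end{itemize}
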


Applying Bernoulli's inequality in a standard fashion, from Theorem~\ref{thm:upper} one can obtain a weaker upper bound on $\tau^2$ without a square root that is a good approximation when $\tilh$ is close to $1$. 
\begin{corollary}\label{cor:upper}
   For every graph without isolated vertices we have
    \begin{equation*}
     \tau^2\leq 1-\frac{(1-\tilh)^2}{2}.
    \end{equation*}
    Moreover, the inequality is sharp if and only if $G$ is disconnected or bipartite.  
\end{corollary}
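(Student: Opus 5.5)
The corollary follows from Theorem~\ref{thm:upper} by bounding the right-hand side of \eqref{eq:upperbound} with an elementary inequality. Set $x:=(1-\tilh)^2$. By Proposition~\ref{prop:h1} we have $\tilh\le 1$. Since $\tilh$ is a maximum of $\tilh(S)\geq 0$, also $\tilh\ge 0$. Hence $x\in[0,1]$.

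The key step is the inequality $\sqrt{1-x}\le 1-x/2$ for $x\in[0,1]$. This is Bernoulli's inequality $(1+t)^{1/2}\le 1+t/2$ with $t=-x\ge -1$. Alternatively, both sides are nonnegative, so we may square, and the claim becomes
\begin{equation*}
1-x\le 1-x+\frac{x^2}{4}.
\end{equation*}
This is obvious, and equality holds if and only if $x=0$. Combining it with Theorem~\ref{thm:upper} gives
\begin{equation*}
\tau^2\le\sqrt{1-(1-\tilh)^2}\le 1-\frac{(1-\tilh)^2}{2},
\end{equation*}
which is the claimed bound.

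For the equality statement, equality in the combined bound requires equality in both steps. Equality in the Bernoulli step forces $x=0$, that is, $\tilh=1$. By Proposition~\ref{prop:h1}, this happens if and only if $G$ is bipartite or disconnected.

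Conversely, suppose $G$ is bipartite or disconnected. Then $\tilh=1$, so the right-hand side equals $1$. Moreover $\tau=1$: for bipartite graphs this is Remark~\ref{rmk:bip}, and for disconnected graphs it holds because $\lambda_2=0$. So equality holds in exactly these cases.

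There is no real obstacle here; the only point needing care is checking that $x\in[0,1]$, so that the Bernoulli step is valid. For the equality characterization one does not even need the equality case of Theorem~\ref{thm:upper}, since equality in the Bernoulli step alone already forces $\tilh=1$.
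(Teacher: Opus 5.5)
Your proof is correct. The inequality itself is obtained exactly as in the paper, by combining Theorem~\ref{thm:upper} with $\sqrt{1-x}\le 1-x/2$ for $x\in[0,1]$. Your statement of Bernoulli's inequality, with exponent $1/2\in[0,1]$, is the right form for this direction; the paper's stated hypothesis $r\ge 1$ belongs to the reverse inequality. You also check that $x=(1-\tilh)^2\in[0,1]$, which the paper skips.

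The genuine difference is in the equality characterization. The paper argues from equality in the corollary to $\tau^2=\sqrt{1-(1-\tilh)^2}$. It then invokes the equality case of Theorem~\ref{thm:upper}, that is, Claim~\ref{cl:equality} with its delicate Cauchy--Schwarz analysis, to conclude that $G$ is bipartite or disconnected. It relies on that same equality case again for the converse.

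You instead use that $\sqrt{1-x}<1-x/2$ strictly for $x>0$, which forces $\tilh=1$ directly. You then translate this via the easy spectral equality case of Proposition~\ref{prop:h1}. For the converse you combine $\tilh=1$ with $\tau=1$, taken from Remark~\ref{rmk:bip} for bipartite graphs and from $\lambda_2=0$ for disconnected ones.

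Your route uses only the inequality part of Theorem~\ref{thm:upper}. It therefore makes the corollary's equality statement independent of Claim~\ref{cl:equality}, and shows that the equality case is driven entirely by the elementary square-root step. The paper's route instead presents the corollary's equality case as inherited from the theorem's.
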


\begin{proof}
Bernoulli's inequality states that for any real numbers $r\geq1$ and $x\geq-1$, we have $$(1+x)^r\leq1+rx.$$ Setting $r=1/2$ and $x=-(1-\tilde{h})^2$, and applying Bernoulli's inequality to the upper bound on $\tau^2$ from Theorem~\ref{thm:upper}, we get
\begin{equation}\label{eq:Bern}
    \tau^2\leq\sqrt{1-(1-\tilh^2)}\leq 1-\frac{(1-\tilh)^2}{2},
\end{equation}
proving the first claim.

If $\tau^2=1-(1-\tilh)^2/2$, then from \eqref{eq:Bern} we have in particular $\tau^2=\sqrt{1-(1-\tilh^2)}$. By the equality case from Theorem~\ref{thm:upper}, we know that this equality holds if and only if $G$ is disconnected or bipartite, that is, if and only if $\tilde{h}=1$. Therefore, for $\tau^2=1-(1-\tilh)^2/2$ to hold, it is necessary that $\tilh=1$.

On the other hand, if $\tilh=1$, then $\tau^2=1$ (by the equality case of Theorem~\ref{thm:upper}), and also $1-(1-\tilh)^2/2=1$. Therefore, $\tilh=1$ is both a necessary and sufficient condition for $\tau^2=1-(1-\tilh)^2/2$. Thus, the inequality from the statement is an equality if and only if $G$ is disconnected or bipartite, as claimed.
\end{proof}

We remark that while the bound in Corollary~\ref{cor:upper} is weaker than the one in Theorem~\ref{thm:upper}, it is not uncommon to use an expression avoiding the square root in the Cheeger inequality
$$
1-\sqrt{1-h^2}\leq \lambda_2.
$$

 Compare, for example, the quantity
 $h^2/2$ in Theorem~2.2 in Chung's book \cite{chung},
with the quantity $1-\sqrt{1-h^2}$
in Theorem~2.3 in the same book \cite{chung}.

\section{Proof of Theorem~\ref{thm:upper}}\label{section:proof}
This section is dedicated to proving Theorem~\ref{thm:upper}.
Much of the proof of Theorem~\ref{thm:upper} uses similar techniques as one of the proofs of the lower bound of the Cheeger inequalities in \eqref{eq:Cheeger}, as presented for example by Luca Trevisan in \cite{trevisan2017lecture}. However, our proof has several additional technicalities due to the form of the Rayleigh Quotient of the operator $M$.\\

First, we introduce a few results that will be helpful for the proof. We start with the following basic property of fractions.

\begin{remark}\label{rmk:minmax}
Let $p,r \geq 0$ and $q,s>0$. Then,
$$\min\biggl\{\frac{p}{q},\frac{r}{s}\biggr\}\leq\frac{p+r}{q+s}\leq \max\biggl\{\frac{p}{q},\frac{r}{s}\biggr\}.$$
\end{remark}

Another ingredient is the following lemma, which is essentially Fact~5.2 from \cite{trevisan2017lecture}, differing only in the inequality inside of the probability. Although the proof is almost identical, we reproduce it here for the sake of completeness.  
\begin{lemma}\label{lem:xyexey}
    Let $X$ and $Y$ be random variables such that $\mathbb{P}(Y>0)=1$. Then,
    $$\mathbb{P}\biggl(\frac{X}{Y}\geq \frac{\mathbb{E}X}{\mathbb{E}Y}\biggr)>0.$$
\end{lemma}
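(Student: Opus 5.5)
The argument is by contradiction, as in Trevisan's Fact~5.2. Suppose $\mathbb{P}\bigl(X/Y \geq \mathbb{E}X/\mathbb{E}Y\bigr)=0$. Then almost surely $X/Y < c$, where $c:=\mathbb{E}X/\mathbb{E}Y$. We assume $\mathbb{E}X$ and $\mathbb{E}Y$ are finite, which is automatic in the intended application to finite probability spaces such as a uniformly random threshold over the vertex values. Note also that $\mathbb{E}Y>0$, because $Y>0$ almost surely and a nonnegative variable with zero mean vanishes almost surely; so $c$ is well defined.

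Since $Y>0$ almost surely, we can multiply through by $Y$ and obtain $X < cY$ almost surely, that is, $cY - X > 0$ almost surely. A random variable that is strictly positive almost surely has strictly positive expectation: otherwise $cY-X=0$ almost surely, contradicting strict positivity. Hence $\mathbb{E}[cY-X]>0$. On the other hand, linearity of expectation gives $\mathbb{E}[cY - X] = c\,\mathbb{E}Y - \mathbb{E}X = 0$ by the choice of $c$. This is a contradiction, so $\mathbb{P}\bigl(X/Y\geq \mathbb{E}X/\mathbb{E}Y\bigr)>0$.

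The only delicate step is the claim that a strictly positive random variable has strictly positive expectation. In the finite (discrete) setting it is immediate, since some outcome with positive probability contributes a positive term to the sum. In general it follows from continuity of measure: $\{Z>0\}=\bigcup_k\{Z>1/k\}$, so some $\{Z>1/k\}$ has positive probability. Apart from that, the proof only reverses Trevisan's inequality direction, so no further obstacle is expected.
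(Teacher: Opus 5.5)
Your proof is correct and is essentially the paper's argument. The paper notes that $\mathbb{E}\bigl(X-\tfrac{\mathbb{E}X}{\mathbb{E}Y}Y\bigr)=0$, concludes directly that $\mathbb{P}\bigl(X\geq \tfrac{\mathbb{E}X}{\mathbb{E}Y}Y\bigr)>0$, and divides by $Y$; you run the same step as a contradiction. Your version also makes explicit what the paper leaves implicit: finiteness of the expectations, $\mathbb{E}Y>0$, and the fact that an almost surely positive variable has positive expectation.
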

\begin{proof}[Proof of Lemma~\ref{lem:xyexey}]
    By linearity of expectation, we see that
    \[
    \mathbb{E}\biggl(X-\frac{\mathbb{E}(X)}{\mathbb{E}(Y)}\cdot Y\biggr) = \mathbb{E}(X)-\frac{\mathbb{E}(X)}{\mathbb{E}(Y)}\cdot \mathbb{E}(Y)=0.
    \]
   Therefore,
    \[
    \mathbbm{P}\biggl( X \geq \frac{\mathbb{E}(X)}{\mathbb{E}(Y)}\cdot Y\biggr)>0,
    \]
    and the claim follows by dividing both sides of the internal inequality by $Y$, which we may do because $Y>0$ almost surely.
\end{proof}

We are now ready to go through the proof of Theorem~\ref{thm:upper}, which is divided into several claims in order to make the argument easier to follow.

\begin{proof}[Proof of Theorem~\ref{thm:upper}]

\renewcommand{\qedsymbol}{$\blacksquare$}

Recall that, by \eqref{eq:tau2}, we have the following characterization:

    $$\tau^2=\max_{\substack{f \in C(V)\setminus\{ \mathbf{0}\}: \\ \sum_{w\in V}\deg w \cdot f(w)=0}} \textrm{RQ}(f)=\max_{\substack{f \in C(V)\setminus\{ \mathbf{0}\}: \\ \sum_{w\in V}\deg w \cdot f(w)=0}}\frac{\sum\limits_{w\in V}\frac{1}{\deg w}\left(\sum\limits_{v\in \mathcal{N}(w)}f(v)\right)^2}{\sum\limits_{w\in V} \deg w\cdot f(w)^2}.$$

    We first find a convenient function whose Rayleigh Quotient gives an upper bound to $\tau^2$, similarly to Lemma~4.5 in \cite{trevisan2017lecture}.

\begin{claim}\label{cl:f}
        There exists a function $f\in C(V)\setminus \{\mathbf 0\}$ such that
        \[
        \tau^2 \leq \RQ(f), 
        \]
        satisfying
        \[
        \vol{{\mathrm{supp}(f)}} \in \biggl(0,\frac{\vol V}{2}\biggr]
        \quad \text{ and }
        \quad
        1\in \range(f) \subset [0,1].
        \]
    \end{claim}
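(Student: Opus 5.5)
The plan is to adapt the standard truncation argument behind Lemma~4.5 in \cite{trevisan2017lecture} to the operator $M$. We start from a maximizer of \eqref{eq:tau2}, shift it by a weighted median, split the result into positive and negative parts, and normalize.

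First we choose the starting function and the shift. By \eqref{eq:tau2}, let $g\in C(V)\setminus\{\mathbf 0\}$ satisfy $\langle g,\mathbf 1\rangle=0$ and $\RQ(g)=\tau^2$. Since $g\neq \mathbf 0$ and $g$ is orthogonal to the constants, $g$ is nonconstant. Choose a weighted median $c\in\R$ with
\[
\vol\{g>c\}\leq \tfrac{\vol V}{2}\quad\text{and}\quad \vol\{g<c\}\leq \tfrac{\vol V}{2}.
\]
To find it, order the vertices by the value of $g$ and take the first value at which the accumulated volume reaches $\vol V/2$.

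Next we show the shift does not decrease the Rayleigh Quotient. Set $h:=g-c\mathbf 1$. Since $M\mathbf 1=\mathbf 1$, $M$ is self-adjoint and $\langle g,\mathbf 1\rangle=0$, we have $\langle Mg,\mathbf 1\rangle=\langle g,\mathbf 1\rangle=0$. Hence
\[
\langle Mh,h\rangle=\langle Mg,g\rangle+c^2\vol V
\quad\text{and}\quad
\langle h,h\rangle=\langle g,g\rangle+c^2\vol V.
\]
Because $\RQ(g)=\tau^2\leq 1$, adding the same nonnegative quantity to numerator and denominator does not decrease the ratio, so $\RQ(h)\geq\tau^2$. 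Also $h\neq\mathbf 0$, since $g$ is nonconstant.

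Then we split $h$ and pick a part. Write $h=h_+-h_-$ with $h_\pm:=\max\{\pm h,0\}$. Both parts have disjoint supports, $\supp h_+=\{g>c\}$ and $\supp h_-=\{g<c\}$, so each support has volume at most $\vol V/2$. Expanding gives
\[
\langle Mh,h\rangle=\langle Mh_+,h_+\rangle+\langle Mh_-,h_-\rangle-2\langle Mh_+,h_-\rangle .
\]
The cross term $\langle Mh_+,h_-\rangle=\sum_{v}\deg v\, h_-(v)\sum_u M_{vu}h_+(u)$ is nonnegative, because $M$ has nonnegative entries by \eqref{eq:Mentries} and $h_\pm\geq 0$. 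Disjointness of supports also gives $\langle h,h\rangle=\langle h_+,h_+\rangle+\langle h_-,h_-\rangle$. If one of $h_\pm$ vanishes, the other equals $\pm h$ and we take it. Otherwise Remark~\ref{rmk:minmax} yields some $\sigma\in\{+,-\}$ with
\[
\RQ(h_\sigma)\geq \frac{\langle Mh,h\rangle}{\langle h,h\rangle}=\RQ(h)\geq\tau^2 .
\]

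Finally we normalize. Set $f:=h_\sigma/\max h_\sigma$. The Rayleigh Quotient is scale invariant, so $\RQ(f)\geq\tau^2$. By construction $\range f\subset[0,1]$ and $1\in\range f$. The support of $f$ is nonempty, so $\vol\supp f\in(0,\vol V/2]$.

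The only delicate step is the sign of the cross term. It is what makes truncation to $h_\sigma$ valid, and it relies on the entrywise nonnegativity of $M=T^2$ rather than on any Laplacian-type structure. The shift step also needs care: it uses $\RQ(g)\leq 1$, which holds because $\tau^2$ is the second largest eigenvalue of $M$ and the largest is $1$.
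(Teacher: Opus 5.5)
Your proposal is correct and follows essentially the same route as the paper. You take a maximizer $g$ of \eqref{eq:tau2} and shift it by a weighted median $c$, which does not decrease the quotient since $M\mathbf 1=\mathbf 1$ and $\RQ(g)\leq 1$. You then split $g-c\mathbf 1$ into positive and negative parts, drop the nonnegative cross term using entrywise nonnegativity of $M$, pick the better part via Remark~\ref{rmk:minmax}, and normalize — exactly the paper's argument, with only slightly more streamlined bookkeeping in the shift computation and the median construction.
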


    \begin{proof}[Proof of Claim~\ref{cl:f}]
    Let $g \in C(V)\setminus\{\mathbf{0}\}$ be a function satisfying \[\sum_{w\in V}\deg w \cdot g(w)=0 \quad \text{and} \quad 
    \tau^2=\mathrm{RQ}(g).\] Notice that, as a consequence, $g$ attains at least one positive and one negative value.\newline 
    
    For any $c \in \mathbb{R}$, we have $\mathrm{RQ}(g-c\cdot \mathbf{1})\geq \mathrm{RQ}(g)$. In fact, for $c\neq 0$,
    \begin{align*}
        \RQ(g-c\cdot \mathbf 1) &= \frac{\langle g-c\cdot \mathbf 1,M(g-c\cdot \mathbf{1}) \rangle}{\langle g - c \cdot \mathbf 1,g-c\cdot \mathbf 1\rangle}\\
        &= \frac{\langle g,Mg \rangle + \langle -c\cdot \mathbf 1,M(-c\cdot \mathbf{1}) \rangle}{\langle g,g\rangle + \langle - c \cdot \mathbf 1, -c\cdot \mathbf 1\rangle}\\
        &= \frac{\langle g,Mg \rangle + c^2 \langle \mathbf 1,M\mathbf{1} \rangle}{\langle g,g\rangle + c^2\cdot \langle \mathbf 1,  \mathbf 1\rangle}\\
        &\ge \min\biggl\{ \frac{\langle g,Mg \rangle}{\langle g,g\rangle}, \frac{c^2 \langle \mathbf 1,M\mathbf{1} \rangle}{c^2\cdot \langle \mathbf 1,  \mathbf 1\rangle}\biggr\}\\
        &= \min \bigl\{ \RQ(g),\RQ(\mathbf 1)\bigr\}\\
        &= \RQ(g),
    \end{align*}
    where we have used Remark~\ref{rmk:minmax} for the inequality, and afterward used the fact that $1=\RQ(\mathbf{1})$ is the largest eigenvalue of $M$.\\

    We now pick $c$ such that $$\vol{\{v: g(v)-c \geq0\}}\geq \frac{\vol V}{2} \quad \text{and} \quad \vol{\{v: g(v)-c \leq 0\}}\geq \frac{\vol V}{2}.$$ To see that this can be done, first recall that $g$ attains only finitely many values, say $g_1 < \ldots <g_k$, which then satisfy 
    $$\vol{\{v: g(v)-g_1 \geq0\}}=\vol{V}\quad \text{and} \quad \vol{\{v: g(v)-g_k \leq 0\}}=\vol{V}.$$ Hence, there exists $i\in\{1\ldots,k\}$ such that 
    $$\vol{\{v: g(v)-g_i \geq0\}}\geq\frac{\vol V}{2}\quad \text{and} \quad \vol{\{v: g(v)-g_{i+1} \leq0\}}\geq\frac{\vol V}{2}.$$ If neither $g_i$ nor $g_{i+1}$ can be chosen as $c$, then 
$$\vol\left\{v: g(v)-\frac{g_i+g_{i+1}}{2} <0\right\}<\frac{\vol V}{2}\quad \text{and} \quad \vol{\left\{v: g(v)-\frac{g_i+g_{i+1}}{2} >0\right\}}<\frac{\vol V}{2},$$ which contradicts that these two volumes add up to $\vol{V}$, because $(g_i+g_{i+1}/2)$ is not in the range of $g$. Hence, $g_i$ or $g_{i+1}$ can be chosen as $c$. \newline 

In particular, this choice of $c$ also implies that $$\vol{\{v: g(v)+c <0\}} \leq \frac{\vol V}{2}\quad \text{and} \quad \vol{\{v: g(v)+c > 0\}}\leq \frac{\vol V}{2}.$$ 

Set now $\tilde{g}:=g-c\cdot \mathbf{1}$, $\tilde{g}^+:=\max{\{\tilde{g},\mathbf{0}\}}$, and $\tilde{g}^-:=\max{\{-\tilde{g},\mathbf{0}\}}$, so that $\tilde{g}=\tilde{g}^+-\tilde{g}^-$. Then, $$\vol{\mathrm{supp}(\tilde{g}^+)} \leq \frac{\vol V}{2}\quad \text{and} \quad \vol{\mathrm{supp}(\tilde{g}^-)} \leq \frac{\vol V}{2}.$$ 
Furthermore, note that
    $
    \supp\bigl(\tilde g^+\bigr) \cap \supp \bigl(\tilde g^-\bigr) = \emptyset.$\\
    
    We now claim that at least one $\tilde{f} \in \{\tilde{g}^+,\tilde{g}^-\}$ satisfies $\tilde{f}\neq \mathbf{0}$ and $\mathrm{RQ}(\tilde{g})\leq \mathrm{RQ}(\tilde{f})$.\newline

    First, note that $\tilde{g}\neq 0$ since $g$ attains at least two different values. Now,
    if $\tilde{g}^+=0$, then $\RQ(\tilde g)=\RQ(\tilde{g}^-)$, proving the claim.
    Analogously, if $\tilde{g}^-=0$, then $\RQ(\tilde g)=\RQ(\tilde{g}^+)$.
    
    If both $\tilde{g}^+$ and $\tilde{g}^-$ are nonzero, then
    \begin{align*}
        \RQ(\tilde g) &= \frac{\langle \tilde{g}^+ - \tilde{g}^-,M( \tilde{g}^+ - \tilde{g}^-)\rangle}{\langle \tilde{g}^+ - \tilde{g}^-,\tilde{g}^+ - \tilde{g}^-\rangle}\\
        &= \frac{\langle \tilde{g}^+,M\tilde{g}^+\rangle + \langle \tilde{g}^-,M\tilde{g}^-\rangle - \langle \tilde{g}^+,M\tilde{g}^-\rangle- \langle \tilde{g}^-,M\tilde{g}^+\rangle}{\langle \tilde{g}^+,\tilde{g}^+\rangle + \langle \tilde{g}^-,\tilde{g}^-\rangle-2\langle \tilde{g}^+,\tilde{g}^-\rangle}\\
        &= \frac{\langle \tilde{g}^+,M\tilde{g}^+\rangle + \langle \tilde{g}^-,M\tilde{g}^-\rangle - \langle \tilde{g}^+,M\tilde{g}^-\rangle- \langle \tilde{g}^-,M\tilde{g}^+\rangle}{\langle \tilde{g}^+,\tilde{g}^+\rangle + \langle \tilde{g}^-,\tilde{g}^-\rangle}\\
        &\overset{(i)}{\leq} \frac{\langle \tilde{g}^+,M\tilde{g}^+\rangle + \langle \tilde{g}^-,M\tilde{g}^-\rangle}{\langle \tilde{g}^+,\tilde{g}^+\rangle + \langle \tilde{g}^-,\tilde{g}^-\rangle}\\
        &\overset{(ii)}{\leq} \max\biggl\{\frac{\langle \tilde{g}^+,M\tilde{g}^+\rangle}{\langle \tilde{g}^+,\tilde{g}^+\rangle},\frac{\langle \tilde{g}^-,M\tilde{g}^-\rangle}{\langle \tilde{g}^-,\tilde{g}^-\rangle}\biggr\}\\
        &= \max\bigr\{\RQ\bigl(\tilde g^+\bigr),\RQ\bigl(\tilde g^-\bigr)\bigr\}.
    \end{align*}
    In $(i)$, we used the fact that all entries of $M$ are nonnegative by Equation~\eqref{eq:Mentries}, and so are all entries of $\tilde g^+$ and $\tilde g^-$. Furthermore, in $(ii)$, we used Remark~\ref{rmk:minmax}.

    Finally, we set $$f:=\frac{\tilde{f}}{\max{\{\tilde{f}(v): v \in V\}}},$$
    and we note that this is well-defined. The function $f: V \rightarrow \mathbb{R}$ then satisfies
    \[
    \vol{{\mathrm{supp}(f)}} \in \left(0,\frac{\vol V}{2}\right] \quad \text{ and } \quad 1\in \range(f) \subset [0,1],
    \]
    as well as the chain of (in)equalities
    \[
    \tau^2 = \RQ(g) \leq \RQ(g - c \cdot \mathbf{1}) \leq \RQ(\tilde f) = \RQ(f).
    \] This proves Claim \ref{cl:f}.
    \end{proof}
    Next, we use $f$ from Claim~\ref{cl:f} to find a set $S \subset V$ such that $\vol{S} \in (0, \vol{V}/2]$ and
\begin{equation}\label{eq:UB}
    \mathrm{RQ}(f) \leq \sqrt{1-(1-\tilh(S))^2}. 
    \end{equation} 
    To this end, we introduce a random subset $S_t$ of $V$, following Chapter 5 of Trevisan \cite{trevisan2017lecture}.
    Due to the more complicated form of our Rayleigh quotient, our proof then deviates from Trevisan's argument and becomes more involved.

    \begin{claim}\label{cl:S}
        There is a set $S$ satisfying
        \[
        \RQ(f) \leq \sqrt{1-\left(1-\tilde{h}(S)\right)^2} \quad \text{ and }\quad \vol S \leq \frac{\vol V}2.
        \]
    \end{claim}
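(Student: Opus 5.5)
The plan is to pick $S$ among the level sets of $f^2$, choosing the threshold at random as in Chapter~5 of \cite{trevisan2017lecture}, and then to finish with Lemma~\ref{lem:xyexey}. For $t\in[0,1)$ let $S_t:=\{v\in V: f(v)^2>t\}$ and let $t$ be uniform on $[0,1)$. Since $1\in\range(f)$, every $S_t$ is nonempty. Since $S_t\subseteq\supp(f)$, Claim~\ref{cl:f} gives $\vol S_t\le \vol V/2$. So every $S_t$ is admissible, and it suffices to find one $t$ with $\RQ(f)\le\sqrt{1-(1-\tilh(S_t))^2}$.

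The computations are as follows. I would introduce three quantities:
\begin{align*}
D&:=\sum_{w\in V}\deg w\cdot f(w)^2, \\
A&:=\sum_{w\in V}\frac{1}{\deg w}\sum_{u,v\in\mathcal N(w)} f(u)f(v), \\
B&:=\sum_{w\in V}\frac{1}{\deg w}\sum_{u,v\in\mathcal N(w)}\min\{f(u)^2,f(v)^2\}.
\end{align*}
Here the inner sums run over ordered pairs, so that $A=\RQ(f)\cdot D$. The level sets satisfy $\mathbb{E}\,\vol S_t = D$. Moreover $e(w,S_t)^2=\sum_{u,v\in\mathcal N(w)}\mathbbm 1[f(u)^2>t]\,\mathbbm 1[f(v)^2>t]$, so $\mathbb{E}\sum_w e(w,S_t)^2/\deg w = B$.

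Next I would use the identity $\sum_w\frac{1}{\deg w}\sum_{u,v\in\mathcal N(w)} f(u)^2=\sum_u \deg u\, f(u)^2 = D$, together with the symmetry of the double sum. This gives
\begin{align*}
D-A&=\frac12\sum_{w}\frac1{\deg w}\sum_{u,v\in\mathcal N(w)}(f(u)-f(v))^2, \\
D+A&=\frac12\sum_{w}\frac1{\deg w}\sum_{u,v\in\mathcal N(w)}(f(u)+f(v))^2, \\
D-B&=\frac12\sum_{w}\frac1{\deg w}\sum_{u,v\in\mathcal N(w)}\bigl|f(u)^2-f(v)^2\bigr|.
\end{align*}
I would then factor $|f(u)^2-f(v)^2|=|f(u)-f(v)|\,(f(u)+f(v))$, which is valid because $f\ge 0$, and apply Cauchy--Schwarz to the weighted sum. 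This yields $D-B\le\sqrt{(D-A)(D+A)}=\sqrt{D^2-A^2}$. Dividing by $D$ gives $1-B/D\le\sqrt{1-\RQ(f)^2}$. I also note that $0\le B\le D$, because $\min\{a^2,b^2\}\le a^2$. Squaring and rearranging then gives
\[
\RQ(f)\le\sqrt{1-(1-B/D)^2}.
\]
This Cauchy--Schwarz step, and in particular finding the right random set (thresholding $f^2$ rather than $f$), is where I expect the real work to lie. The point is that it is exactly $f^2$-thresholding that turns $\mathbb E\, e(w,S_t)^2$ into $\min$ terms matching the $D\pm A$ decomposition.

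Finally I would apply Lemma~\ref{lem:xyexey} with $X=\sum_w e(w,S_t)^2/\deg w$ and $Y=\vol S_t$. Here $Y>0$ almost surely, because $S_t\neq\emptyset$. The lemma produces some $t$ with $\tilh(S_t)=X/Y\ge B/D$. Since $x\mapsto\sqrt{1-(1-x)^2}$ is increasing on $[0,1]$, and $B/D\le\tilh(S_t)\le1$, we obtain
\[
\RQ(f)\le\sqrt{1-(1-\tilh(S_t))^2}.
\]
Setting $S:=S_t$ proves Claim~\ref{cl:S}.
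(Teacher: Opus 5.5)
Your proof is correct and is essentially the paper's argument: you use the same random level set of $f^2$ (the paper's $t^2\sim U[0,1]$ with $S_t=\{f\ge t\}$ coincides with your construction up to a null set), the same Cauchy--Schwarz bound, and the same final use of Lemma~\ref{lem:xyexey} plus monotonicity of $x\mapsto\sqrt{1-(1-x)^2}$. The only difference is cosmetic: you apply Cauchy--Schwarz to $|f(u)-f(v)|\,(f(u)+f(v))$ in the classical Cheeger style, while the paper applies it to $\min\{f(u),f(v)\}\cdot\max\{f(u),f(v)\}$ and identifies the sum of the $\max^2$ terms with $2D-B$; since $(\max-\min,\max+\min)$ is an invertible linear change of $(\min,\max)$, both versions reduce to exactly $A^2\le B(2D-B)$, i.e.\ $(D-B)^2\le D^2-A^2$.
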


    \begin{proof}[Proof of Claim~\ref{cl:S}]
    To show the existence of such $S$, we consider the random set $$S_t:=\{v: f(v)\geq t\},$$ where $t$ is a random variable distributed as $t^2 \sim U[0,1]$. We thus have $$\mathbb{P}(v \in S_t)=\mathbb{P}(f(v)\geq t)=\mathbb{P}(f(v)^2\geq t^2)=f(v)^2.$$ Moreover, for two vertices $u,v \in V$, we have
    \begin{equation}\label{eq:prob}
        \mathbb{P}(u,v\in S_t)=\min\{f(u)^2,f(v)^2\} \quad \text{ and } \quad \mathbb{P}((u\in S_t) \lor (v\in S_t))=\max\{f(u)^2,f(v)^2\}.
    \end{equation}
    As $\max \{f(v): v\in V\}=1$ by Claim \ref{cl:f}, we also have that $\mathbb{P}(S_t = \emptyset)=0$. Moreover, since $\mathbb{P}(t=0)=0$, almost surely $S_t \subset \mathrm{supp}(f)$. Therefore, since  $\vol{{\mathrm{supp}(f)}} \in (0, \vol{V}/2]$ by Claim \ref{cl:f}, it follows that, almost surely, $\vol{S_t} \in (0, \vol{V}/2]$. Moreover, since 
    $$
   \vol S_t=\sum_{v\in V}\deg v\cdot  \mathbbm{1}_{S_t}(v),$$ we have that
    $$
  \mathbb{E}\left( \vol S_t\right)=\sum_{v\in V}\deg v\cdot  \mathbb{E}\left(\mathbbm{1}_{S_t}(v)\right)=\sum_{v\in V}\deg v\cdot  \mathbb{P}\left(v\in S_t\right)=\sum_{w \in V} \deg v \cdot f(v)^2.
    $$

    Now, to prove \eqref{eq:UB}, we first bound its left-hand side by rewriting the Rayleigh quotient in a form that is suitable for the Cauchy-Schwarz inequality as follows.
    \begin{align}
    \RQ(f)&=
    \frac{\sum\limits_{w\in V}\frac{1}{\deg w}\left(\sum\limits_{v\in \mathcal{N}(w)}f(v)\right)^2}{\sum\limits_{w\in V} \deg w\cdot f(w)^2} \nonumber \\
    &= \frac{\sum_{\substack{w\in V\\ (u,v)\in \mathcal N(w)^2}} \frac{f(u)f(v)}{\deg w}}{\sum\limits_{w\in V} \deg w\cdot f(w)^2} \nonumber
    \\&= \frac{\sum_{\substack{w\in V \nonumber \\ (u,v)\in \mathcal N(w)^2}} \frac{\min\{f(u),f(v)\}\cdot \max\{f(u),f(v)\}}{\deg w}}{\sum\limits_{w\in V} \deg w\cdot f(w)^2} \nonumber \\
    &\leq \frac{\biggl(\sum_{\substack{w\in V\\ (u,v)\in \mathcal N(w)^2}} \frac{\min\{f(u),f(v)\}^2}{\deg w}\biggr)^{1/2}\cdot \biggl(\sum_{\substack{w\in V\\ (u,v)\in \mathcal N(w)^2}} \frac{\max\{f(u),f(v)\}^2}{\deg w}\biggr)^{1/2} }{\sum\limits_{w\in V} \deg w\cdot f(w)^2} \label{line1},
\end{align}
where in~\eqref{line1} we applied the Cauchy-Schwarz inequality to the vectors
\[
\biggl( \frac{\min\{f(u),f(v)\}}{\sqrt{\deg w}} \biggr)_{\substack{w\in V\\ (u,v) \in \mathcal N(w)^2}} \quad \text{ and }\quad \biggl( \frac{\max\{f(u),f(v)\}}{\sqrt{\deg w}} \biggr)_{\substack{w\in V\\ (u,v) \in \mathcal N(w)^2}}.
\]

We can now use \eqref{eq:prob}, together with linearity of expectation, to further bound $\RQ(f)$ by rewriting \eqref{line1} as follows:      
\begin{align}
    \RQ(f)&\leq \frac{\biggl(\sum_{\substack{w\in V\\ (u,v)\in \mathcal N(w)^2}} \frac{\mathbb{P}(u,v\in S_t)}{\deg w}\biggr)^{1/2}\cdot \biggl(\sum_{\substack{w\in V\\ (u,v)\in \mathcal N(w)^2}} \frac{\mathbb{P}(u\in S_t \vee v \in S_t)}{\deg w}\biggr)^{1/2} }{\mathbb E(\vol S_t)} \nonumber \\
    &= \frac{\biggl(\sum_{\substack{w\in V\\ (u,v)\in \mathcal N(w)^2}} \frac{\mathbb E(\mathbbm{1}_{\{u,v\in S_t\}})}{\deg w}\biggr)^{1/2}\cdot \biggl(\sum_{\substack{w\in V\\ (u,v)\in \mathcal N(w)^2}} \frac{\mathbb{E}(\mathbbm{1}_{\{u\in S_t \vee v \in S_t\}})}{\deg w}\biggr)^{1/2} }{\mathbb E(\vol S_t)}\nonumber \\
    &= \frac{\biggl(\mathbb E\bigl(\sum_{\substack{w\in V\\ (u,v)\in \mathcal N(w)^2}} \frac{\mathbbm{1}_{\{u,v\in S_t\}}}{\deg w}\bigr)\biggr)^{1/2}\cdot \biggl(\mathbb{E}\bigl(\sum_{\substack{w\in V\\ (u,v)\in \mathcal N(w)^2}} \frac{\mathbbm{1}_{\{u\in S_t \vee v \in S_t\}}}{\deg w}\bigr)\biggr)^{1/2} }{\mathbb E(\vol S_t)} \label{line2}.
\end{align}
Now, note that
    $$\mathbbm{1}((u\in S_t)\lor(v\in S_t))=\mathbbm{1}(u,v\in S_t)+\mathbbm{1}((u\in S_t)\dot\lor(v\in S_t)),$$
    where $\dot\lor$ denotes the exclusive or, and that
    $$\sum\limits_{u,v\in \mathcal{N}(w)}\mathbbm{1}(u,v \in S_t)=\left(\sum\limits_{v\in \mathcal{N}(w)}\mathbbm{1}(v \in S_t)\right)^2=e(w,S_t)^2.$$
    Furthermore, 
    \begin{align*}
    \sum\limits_{(u,v)\in \mathcal{N}(w)^2}\mathbbm{1}(u \in S_t \dot\vee v \in S_t)&=\sum\limits_{(u,v)\in \mathcal{N}(w)^2}\mathbbm{1}((u \in S_t) \land (v \notin S_t))+\sum\limits_{(u,v)\in \mathcal{N}(w)^2}\mathbbm{1}((u \notin S_t) \land (v \in S_t))\\
    &={2}e(w,S_t)\cdot e(w,\bar{S_t}).
    \end{align*}
    
    The right-hand side of \eqref{line2} can thus be further rewritten in the following way:
    
\begin{align}
    \RQ(f)&\leq \frac{\biggl(\mathbb E \bigl(\sum_{w\in V} \frac{e(w,S_t)^2}{\deg w}\bigr)\biggr)^{1/2}\cdot \biggl(\mathbb{E}\bigl(\sum_{w\in V} \frac{e(w,S_t)^2 + 2e(w,S_t)\cdot e(w,\overline{S_t})}{\deg w}\bigr)\biggr)^{1/2} }{\mathbb E(\vol S_t)} \nonumber \\
    &= \frac{\biggl(\mathbb E \bigl(\sum_{w\in V} \frac{e(w,S_t)^2}{\deg w}\bigr)\biggr)^{1/2}\cdot \biggl(\mathbb{E}\bigl(\sum_{w\in V} \frac{e(w,S_t)^2 + 2e(w,S_t)(\deg w - e(w,S_t))}{\deg w}\bigr)\biggr)^{1/2} }{\mathbb E(\vol S_t)} \nonumber \\
    &= \frac{\biggl(\mathbb E \bigl(\sum_{w\in V} \frac{e(w,S_t)^2}{\deg w}\bigr)\biggr)^{1/2}\cdot \biggl(\mathbb{E}\bigl(\sum_{w\in V}( \frac{-e(w,S_t)^2}{\deg w} + 2e(w,S_t))\bigr)\biggr)^{1/2} }{\mathbb E(\vol S_t)} \nonumber \\
    &= \label{line3} \frac{\biggl(\mathbb E \bigl(\sum_{w\in V} \frac{e(w,S_t)^2}{\deg w}\bigr)\biggr)^{1/2}\cdot \biggl(-\mathbb{E}(\sum_{w\in V} \frac{e(w,S_t)^2}{\deg w}) + 2\cdot\mathbb{E}(\vol S_t)\biggr)^{1/2} }{\mathbb E(\vol S_t)}.
\end{align}
    
    We now define two random variables \[X_t\coloneqq \sum_{w\in V} \frac{e(w,S_t)^2}{\deg w} \quad \text{ and }Y_t\coloneqq \vol S_t,\] 
  and we note that $\mathbb{P}(Y_t>0)=1$, since we have seen that $\vol{S_t} \in (0, \vol{V}/2]$ almost surely. Using this notation, we can rewrite \eqref{line3} to obtain
    \begin{align*}
    \RQ(f)&\leq
    \frac{\biggl(\mathbb E \bigl(X_t)\biggr)^{1/2}\cdot \biggl(-\mathbb{E}(X_t) + 2\cdot\mathbb{E}(Y_t)\biggr)^{1/2} }{\mathbb E(Y_t)}\\
    &=\sqrt{-\frac{\mathbb{E}(X_t)^2}{\mathbb{E}(Y_t)^2}+2\frac{\mathbb{E}(X_t)}{\mathbb{E}(Y_t)}}\\
    &=\sqrt{1-\left(1-\frac{\mathbb{E}(X_t)}{\mathbb{E}(Y_t)}\right)^2}.
    \end{align*}
    By Lemma~\ref{lem:xyexey}, there is some $t_0 \in [0,1]$ such that
\begin{equation}\label{eq:Etot0}
\frac{\mathbb E(X_t)}{\mathbb E(Y_t)}\leq \frac{X_{t_0}}{Y_{t_0}}.
\end{equation}
Using \eqref{eq:Etot0} together with the fact that the function $x\mapsto \sqrt{1-(1-x)^2}$ is increasing on the interval $[0,1]$, we thus immediately get
\begin{align*}
    \sqrt{1-\left(1-\frac{\mathbb{E}(X_t)}{\mathbb{E}(Y_t)}\right)^2}
    &\leq \sqrt{1-\left(1-\frac{X_{t_0}}{Y_{t_0}}\right)^2}\\
    &= \sqrt{1-\Bigl(1-\frac{1}{\vol S_{t_0}}\sum_{w\in V} \frac{e(w,S_{t_0})^2}{\deg w}\Bigr)^2}\\
    &=\sqrt{1-(1-\tilde{h}(S_{t_0}))^2}.
\end{align*}
This proves the existence of $S=S_{t_0}$ satisfying \eqref{eq:UB} and $\vol S \leq \vol V/2$.
\end{proof}
Combining Claims~\ref{cl:f} and \ref{cl:S} together with the fact that the function $x\mapsto \sqrt{1-(1-x)^2}$ is increasing on the interval $[0,1]$, we have
    \[\tau^2 \leq \RQ(f)\leq \sqrt{1-\left(1-\tilde{h}(S)\right)^2} \leq \sqrt{1-\left(1-\tilde{h}\right)^2},\]
concluding the proof of the upper bound on $\tau^2$.
\newline

In the remainder of the proof, we analyze the equality case.

\begin{claim}\label{cl:equality}
    The inequality from~\eqref{eq:upperbound} is an equality if and only if $G$ is bipartite or disconnected.
\end{claim}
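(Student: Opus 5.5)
We prove the two directions separately. The easy direction rests on results already in the paper. The hard direction shows that equality forces $\tilh=1$, after which Proposition~\ref{prop:h1} finishes the argument.

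\emph{If $G$ is bipartite or disconnected.} Remark~\ref{rmk:bip} and the remark on disconnected graphs give $\tau=1$ in both cases, so $\tau^2=1$. Proposition~\ref{prop:h1} gives $\tilh=1$ in both cases, so the right-hand side is $\sqrt{1-0}=1$. Hence equality holds.

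\emph{If equality holds, we show $G$ is bipartite or disconnected.} Assume $\tau^2=\sqrt{1-(1-\tilh)^2}$. I would avoid tracking equality through the Cauchy--Schwarz chain of Claim~\ref{cl:S}, since that route is delicate (equality in the Cauchy--Schwarz step, in Lemma~\ref{lem:xyexey}, and so on). Instead I would compare with the lower bound of Corollary~\ref{cor:lower}, which gives $\tau^2\ge 2\tilh-1$. Write $x:=\tilh\in[0,1]$. Equality together with this bound gives
\[
\sqrt{1-(1-x)^2}\;\ge\; 2x-1 .
\]
This alone does not force $x=1$: at $x=1/2$ the left side is $\sqrt{3}/2$ and the right side is $0$. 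So the comparison argument fails, and equality must be exploited through the chain itself.

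\emph{Tracing the chain.} We have
\[
\tau^2\le \RQ(f)\le \sqrt{1-\bigl(1-\mathbb E X_t/\mathbb E Y_t\bigr)^2}\le \sqrt{1-(1-\tilh(S_{t_0}))^2}\le\sqrt{1-(1-\tilh)^2}.
\]
Equality overall forces equality at every step, and in particular in the Cauchy--Schwarz step~\eqref{line1}. That step is an equality only if the vectors $(\min\{f(u),f(v)\}/\sqrt{\deg w})$ and $(\max\{f(u),f(v)\}/\sqrt{\deg w})$ are proportional. Consider the diagonal pairs $u=v$: there both entries equal $f(u)/\sqrt{\deg w}$, and $f$ is nonzero somewhere, so the proportionality constant must be $1$. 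Then $\min\{f(u),f(v)\}=\max\{f(u),f(v)\}$, that is, $f(u)=f(v)$, for every pair $u,v$ with a common neighbour. Hence $f$ is constant on each class of the relation ``at even distance''. Within each connected component of $G$, this relation has one class if the component is non-bipartite and two classes (the two sides) if it is bipartite.

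\emph{Concluding from the structure of $f$.} Since $f\ge 0$, $1\in\range(f)$, and $\vol\supp(f)\le \vol V/2$, the support of $f$ is a proper nonempty union of such classes. If $G$ is connected and non-bipartite, there is only one class, so $f$ would be constant and nonzero, which contradicts $\vol\supp f\le \vol V/2<\vol V$. Therefore $G$ is disconnected or bipartite.

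The main obstacle is justifying the tracing of equality. The inequality $\tau^2\le\RQ(f)$ needs care because $f$ came from a maximizer $g$, but we only need the Cauchy--Schwarz step to be tight. That holds once every inequality in the chain, each of which is a ``$\le$'' between fixed quantities, is tight, because equality of the two endpoints forces all intermediate inequalities to be equalities.
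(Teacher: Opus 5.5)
Your argument is correct and is essentially the paper's. Equality forces tightness of the Cauchy--Schwarz step \eqref{line1}; the diagonal pairs $u=v$ force the proportionality constant to be $1$, hence $f(u)=f(v)$ whenever $u,v$ share a neighbour; and the parity structure of a connected graph then gives bipartiteness. The paper builds the distance-parity classes from a base vertex and shows they are independent, while you argue contrapositively that a connected non-bipartite graph would make $f$ constant, contradicting $\vol\supp(f)\le \vol V/2$. The one correction is wording: the classes you need are those of ``joined by a walk of even length'' (the transitive closure of sharing a neighbour), because ``at even distance'' is not a transitive relation in non-bipartite graphs.
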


\begin{proof}[Proof of Claim~\ref{cl:equality}.]

If the inequality is sharp, then the Cauchy–Schwarz bound in \eqref{line1} must itself be attained sharply. This occurs if and only if, for every $w \in V$ and every pair $(u,v) \in \mathcal N(w)^2$, either $$\min\{f(u),f(v)\}=t\cdot \max\{f(u),f(v)\}\quad \text{or}\quad t \cdot \min\{f(u),f(v)\} = \max\{f(u),f(v)\},$$
for some $t\geq 0$. In particular, if $u=v$, then necessarily $f(u)=f(v)$, which implies $t=1$ and thus
\begin{equation}\label{eq:CSeq}
f(u)=f(v) \text{ for all } w\in V \text{ and all } (u,v)\in \mathcal N(w)^2.
\end{equation}
If $G$ is disconnected, then $\tilh = \tau^2 = 1$, and hence the bound in \eqref{eq:upperbound} is an equality.

Therefore, we may assume that $G$ is connected.
Fix a vertex $v_0 \in V$. Observe that, by \eqref{eq:CSeq}, all vertices in the neighborhood $\mathcal{N}(v_0)$ necessarily assume a common function value; we denote this value by $a_1$. Moreover, the vertices in the neighborhood of $\mathcal{N}(v_0)$, which in particular includes $v_0$ itself, must assume another common function value; we denote this value by $a_0$.

This reasoning can be extended inductively to the entire graph. Let $S_0$ denote the set of vertices at an even distance from $v_0$, and let $S_1$ denote the set of vertices at an odd distance from $v_0$.
By construction, every vertex in $S_0$ has at least one neighbor in $S_1$ and vice versa, since $v_0$ has at least one neighbor in $S_1$, and every vertex $w\neq v_0$ has a neighbor $v$ satisfying $d(v,v_0) = d(w,v_0)-1.$ One obtains that all vertices in $S_0$ must attain the common function value $a_0$, and analogously, all vertices in $S_1$ attain the common function value $a_1$.

Recall that the function $f$ attains at least two function values by construction (one being equal to zero), from which it follows that $a_0\neq a_1$. Furthermore, by Equation~\eqref{eq:CSeq}, no vertex can have a neighbor in $S_0$ and a neighbor in $S_1$.

We conclude that $S_0$ and $S_1$ are independent sets, and that $G$ is bipartite, with bipartition $(S_0,S_1)$.

Conversely, if $G$ is disconnected or bipartite, then $\tau^2 = \tilh =1$, in which case Equation~\eqref{eq:upperbound} is an equality.
\end{proof}

This concludes the proof of Theorem~\ref{thm:upper}.
\renewcommand{\qedsymbol}{$\square$}
\end{proof}

\section*{Tool and computational resource disclosure}
The authors used Python to perform computations related to some of the examples presented in this paper. The authors also used a large language model as a tool for language polishing and for identifying possible inconsistencies or errors in preliminary drafts.

\section*{Funding}
Raffaella Mulas is supported by the Dutch Research Council (NWO) through the grant VI.Veni.232.002.
The research of Jan Petr is funded by the Deutsche Forschungsgemeinschaft (DFG, German Research Foundation) – 542321564.

\bibliographystyle{plain} 

\bibliography{Bibliography}

\end{document}